\documentclass[12pt]{article}

\usepackage{amsmath,amsthm,amsfonts,latexsym,amscd}

\usepackage[mathscr]{eucal}

\theoremstyle{plain}
            \newtheorem{theorem}{Theorem}[section]
            \newtheorem{lemma}[theorem]{Lemma}
            \newtheorem{corollary}[theorem]{Corollary}
            \newtheorem{proposition}[theorem]{Proposition}
             
\theoremstyle{definition}

            \newtheorem{notation}[theorem]{Notation}

\theoremstyle{remark} 
\newtheorem{remark}[theorem]{Remark}

\numberwithin{equation}{section}

\addtocounter{section}{0}

\newcommand{\script}[1]{{\mathcal{#1}}}

\newcommand{\spp}{\script P}

\newcommand{\gl}{\lambda}
\newcommand{\gb}{\beta}
\newcommand{\ga}{\alpha}

\newcommand{\gs}{\sigma}

\newcommand{\bthrm}{\begin{theorem}}
\newcommand{\ethrm}{\end{theorem}} 
\newcommand{\blem}{\begin{lemma}}
\newcommand{\elem}{\end{lemma}}
\newcommand{\bprf}{\begin{proof}}
\newcommand{\eprf}{\end{proof}}

\newcommand{\bnn}{\gb\mathbb N\setminus \mathbb N}

\newcommand{\scp}{\script P}

\newcommand{\ds}{\displaystyle}

\newcommand{\iab}{I_{\text{ab}}}

\begin{document}

\title{The Kadison-Singer problem for the direct sum of matrix algebras}
\author{Charles Akemann, Joel Anderson and Betul Tanbay}

\maketitle

\markright{\large The Kadison-Singer Problem}

\begin{abstract}
Let $M_n$ denote the algebra of complex $n\times n $ matrices and write $M$ for the direct sum of the $M_n$.  So a typical element of $M$ has the form
\[
x = x_1\oplus x_2 \cdots \oplus x_n \oplus \cdots,
\]
	where $x_n \in M_n$ and $\|x\| = \sup_n\|x_n\|$.  We set  $D= \{\{x_n\} \in M: x_n$ is diagonal for all $N\}$.  We conjecture (contra Kadison and Singer (1959)) that every pure state of $D$ extends uniquely to a pure state of $M$.  This is known for the normal pure states of D, and we show that this is true for a (weak*) open, dense subset of all the singular pure states of $D$.  We also show that (assuming the Continuum hypothesis) $M$ has pure states that are not multiplicative on any maximal abelian *-subalgebra of $M$.

\end{abstract}

\bigskip

Let $M_n$ denote the algebra of complex $n\times n $ matrices and write $M$ for the direct sum of the $M_n$.  So a typical element of $M$ has the form
\[
x = x_1\oplus x_2 \cdots \oplus x_n \oplus \cdots,
\]
where $x_n \in M_n$ and $\|x\| = \sup_n\|x_n\|$. We embed $M_n$ in $M$ in the natural way and $M$ in $B(H)$ (the algebra of all bounded operators on a Hilbert space) in the natural way by viewing elements of $M$ as block diagonal matrices.  We use $D$ to denote the diagonal elements in $M$ ($D$ also comprises the diagonal elements of $B(H)$ in this embedding).  We write 
$\spp$ for the unique conditional expectation of $B(H)$ (or $M$) onto $D$. Our purpose here is  to study the  Kadison-Singer problem in this context.  Our conjecture  in the context of $M$ is:

\medskip

  CONJECTURE: Each pure state of $D$ has a unique state extension to $M$.
  
 \medskip
 
 \noindent
 This problem has many equivalent formulations. In their original paper Kadison and Singer  ``inclined to the view'' that  our conjecture had a negative answer.  We take the opposite position here

In what  follows, we could take the
sizes of the matrix summands of $M$ to be any finite
numbers that tend to infinity
with $n$.  Our choice has been made purely for notational convenience. It is well known that all of the pure states on $D$ extend uniquely  to $M$  if and only if  they all extend uniquely to $B(H)$.    In certain ways  $M$ is simpler than
$B(H)$.  For example $M$ does not contain a non-atomic MASA (i.e. maximal abelian *subalgebra). In other ways  $M$ is more complicated than $B(H)$. For example it has a large center and a complicated  ideal structure. 
$M$ is also an appealing venue for study of this problem since it may be possible to avoid the set-theoretic issues that arise  in $B(H)$.

Let's address the last point
first.   We  begin by introducing some notation.   Recall that an element $b \in B(H)$ (or $b \in M)$ is called {\em paveable}  if for all $\epsilon > 0$ there exist an integer $ m > 1$ and projections $p_1, ..., p_m \in D$  with $\ds  \sum_1^m p_j = 1$ such that 
\[
\big \|\sum_1^m p_j(b-\spp(b))p_j \big \| < \epsilon  .
\]
  There are two natural approaches  to this problem.   A well known result of the second author \cite{ext}  states that our conjecture is true iff each element of $M$ (or $B(H)$) is paveable.  One
can either show that more and more elements of $B(H)$ or $M$ are paveable, or one can show
that more and more singular pure states of $D$ have
unique extension to all of $M$ or $B(H)$.  In  $B(H)$  only the first has
been successful without the use of the continuum hypothesis or similar set theoretic assumptions.  In particular, {\bf no }singular pure states of $D$ have been shown to have unique state extensions to $B(H)$ without  using  the continuum hypothesis or something similar.

As we show in Section 3, the situation is very different for $M$.  In $M$ there is a weak*
relatively open, unitarily invariant, dense subset of the singular pure states of $D$ for which pure
state extensions are unique.  In particular, the set of singular pure states of $D$ that fail to have unique state extensions to $M$ (if any) is of first category  in the set of all singular pure states of $D$ with the weak* topology.  At present, even using additional set theoretic assumptions, there is no result this strong for $B(H)$.  We shall also show that  every element of the ideal $I_{ab}$ of $M$ generated by the abelian projections is paveable.

In the final section, we show that, assuming the Continuum Hypothesis, there are pure states of $M$ that are not pure on any 
MASA of $M$.

\section{Ideals in $M$}

	Let us begin by presenting some facts about  ideals in $M$.  {\bf In this paper the term ideal will always refer to a norm-closed two-sided ideal.}  The ideal structure of $M$ is quite rich and our discussion is not complete.  Some of the results presented here are known.    Nevertheless it seems worthwhile to collect them and to point out connections to the Kadison-Singer problem.   Let us now introduce some more notation.   Write $\tau_n$ for the normalized trace on $M_n$ and 
\[
\|a\|_2 = (\tau_n(a^*a))^{1/2}.
\]

Ideals in $M$  were studied by Wright in \cite{wr}.  He  called a subset $L$ of projections in $M$ a {\em $p$-ideal} if:
\begin{enumerate}
\item it is closed under equivalence and 
\item it is closed under the formation of sups and infs
\end{enumerate}  
and he proved the following theorem  in \cite{wr}. We write $Z(M)$ for the center of $M$.

 \begin{theorem}  The following statements hold.

\begin{enumerate}

\item   The $p$-ideals in the set of projections in $M$ are in one-to-one correspondence with the ideals in $M$.

\item Each ideal in $M$ is generated by its projections.

\item  
There is a one-to-one correspondence between the maximal ideals of $M$ and the maximal ideals in  $Z(M)$.
\end{enumerate}

\end{theorem}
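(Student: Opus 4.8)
The plan rests on the structure of $M=\prod_n M_n$ as a von Neumann algebra whose centre $Z(M)\cong\ell^\infty$ is generated by the minimal central projections $p_n$ (the units of the simple summands $M_n=p_nMp_n$), so that the maximal ideal space of $Z(M)$ is $\bn$. The first step is $(2)$, which also yields half of $(1)$: given a closed ideal $J$ and $x\in J$, for $\epsilon>0$ the spectral projection $e=\chi_{[\epsilon^{2},\infty)}(x^{*}x)$ lies in $M$ (it is computed coordinatewise inside the finite-dimensional $M_n$), and since $0\le e\le\epsilon^{-2}x^{*}x\in J$ and closed ideals of a $C^{*}$-algebra are hereditary, $e\in J$; as $\|x-xe\|\le\epsilon$, it follows that $x$ lies in the closed ideal generated by the projections of $J$, so $J$ equals that ideal.

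For $(1)$, put $L_J=\{\text{projections of }J\}$ and let $J_L$ be the closed ideal generated by a $p$-ideal $L$. That $L_J$ is a $p$-ideal is routine: it is closed under equivalence ($q=vpv^{*}$ when $v^{*}v=p$, $vv^{*}=q$), hereditary ($r=rpr$ when $r\le p\in J$), and closed under the lattice operations (for the join use Kaplansky's formula $p\vee q-p\sim q-p\wedge q\le q$). By $(2)$, $J_{L_J}=J$, so it remains to see $L_{J_L}=L$. If $P$ is a projection in $J_L$, choose $z=\sum_{j\le m}a_jq_jb_j$ with $q_j\in L$ and $\|P-z\|<1$; the range projection of each $a_jq_jb_j$ is $\le$ the range projection of $a_jq_j$, which is subequivalent to $q_j$ and hence lies in $L$, so their finite join $E$ lies in $L$. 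Since $Ez=z$ we get $\|(1-E)P\|=\|(1-E)(P-z)\|<1$, which forces $P$ to be subequivalent to $E$, and therefore $P\in L$. Thus $J\mapsto L_J$ is a bijection with inverse $L\mapsto J_L$.

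Part $(3)$ carries the real content. One direction is soft: if $J\subset M$ is maximal then $M/J$ is a simple unital $C^{*}$-algebra, so its centre is $\mathbb C$; the image of $Z(M)$ is a unital central subalgebra, hence all of $\mathbb C$, so $Z(M)\cap J$ is a maximal ideal of $Z(M)$. This defines a map $\Phi$ from the maximal ideals of $M$ to those of $Z(M)$, and $\Phi$ is onto: a maximal ideal $\mathfrak m$ of $Z(M)$ generates a proper ideal of $M$, inside which Zorn gives a maximal ideal meeting $Z(M)$ in $\mathfrak m$. Injectivity of $\Phi$ reduces to the statement that the ideal $\langle\mathfrak m\rangle$ of $M$ generated by a maximal ideal $\mathfrak m$ of $Z(M)$ lies in a \emph{unique} maximal ideal of $M$. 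For $\mathfrak m$ supported at $n\in\mathbb N$ this is immediate, since $M/\langle\mathfrak m\rangle\cong M_n$ is simple; the substance is a free ultrafilter $\omega\in\bnn$, for which $\langle\mathfrak m\rangle=I_\omega:=\{x\in M:\lim_\omega\|x_n\|=0\}$, and now $M/I_\omega$ is \emph{not} simple (for instance $\iab$ descends to a proper ideal of $M/I_\omega$, since a finite sum of conjugates of abelian projections has coordinates of bounded rank).

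I expect the main obstacle to be identifying the unique maximal ideal above $\mathfrak m_\omega$ by hand. The candidate is $\widehat{\mathcal M}_\omega:=\{x\in M:\lim_\omega\|x_n\|_2=0\}$, a proper ideal containing $I_\omega$ (as $\|a\|_2\le\|a\|$ on each $M_n$ while $\|1_n\|_2=1$). To show every proper ideal $J\supseteq I_\omega$ sits inside it, suppose some $x\in J$ has $\delta:=\lim_\omega\|x_n\|_2>0$; since $\tfrac1n\sum_j s_j(x_n)^{2}=\|x_n\|_2^{2}\ge\delta^{2}/2$ along $\omega$ and $s_j(x_n)\le\|x\|$, a proportion at least $\eta:=\delta^{2}/(4\|x\|^{2})$ of the $s_j(x_n)$ exceed $\delta/2$, so the projection $e_n=\chi_{(\delta/2,\infty)}(|x_n|)$ has $\rnk(e_n)\ge\eta n$ along $\omega$, and by heredity $e=(e_n)_n\in J$. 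Because $\rnk(e_n)\ge\eta n$ with $\eta\le 1$, the identity $1_n$ is dominated by a sum of $m=\lceil1/\eta\rceil$ unitary conjugates of $e_n$ (tile the standard basis of $\mathbb C^{n}$ by $m$ blocks of size $\le\rnk(e_n)$ and rotate $\operatorname{ran}e_n$ over each block); assembling these into unitaries $u_1,\dots,u_m\in M$, the element $z=\sum_{i\le m}u_ieu_i^{*}\in J$ satisfies $z_n\ge1_n$ on a set $T\in\omega$ and $\|z\|\le m$, so multiplying $z$ by its coordinatewise inverse on $T$ (a contraction in $M$) puts the central projection $p_T$ in $J$; since $p_{T^{c}}\in I_\omega\subseteq J$ this gives $1=p_T+p_{T^{c}}\in J$, contradicting properness. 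Hence $\widehat{\mathcal M}_\omega$ is the unique maximal ideal over $\mathfrak m_\omega$ and $\Phi$ is a bijection, proving $(3)$. By contrast, $(1)$ and $(2)$ are bookkeeping once one knows that spectral projections, polar decompositions and finite joins of projections stay inside $M$; the trace-norm count and the combinatorial tiling of $1_n$ are the genuinely new ingredients, together with the small checks that $\langle\mathfrak m_\omega\rangle=I_\omega$ and that $\widehat{\mathcal M}_\omega$ is an ideal.
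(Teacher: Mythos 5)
The paper never proves this theorem: it is quoted from Wright \cite{wr}, so your argument is necessarily your own route, and it is essentially correct as a self-contained proof for this particular $M$. Parts (1) and (2) are the standard spectral-projection and heredity bookkeeping you describe (your use of $\|(1-E)P\|<1$ to get $P$ subequivalent to $E$ is fine), and the real content of your part (3) --- that the unique maximal ideal of $M$ lying over a free ultrafilter $\omega$ is $I_2(\{\omega\})=\{x:\lim_\omega\|x_n\|_2=0\}$, proved by extracting a spectral projection whose rank is a definite fraction of $n$ along $\omega$ and then dominating $1_n$ by boundedly many unitary conjugates of it --- is in substance the same device the paper itself uses in the proof of Proposition 1.4 to show that $I_2(\Omega)$ is the largest ideal with prescribed central intersection and $I_\infty(\Omega)$ the smallest; your part (3) amounts to that proposition specialized to $\Omega=\{\omega\}$, together with the easy identifications $\langle\mathfrak m_\omega\rangle=I_\infty(\{\omega\})$ and the properness of $I_2(\{\omega\})$. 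So your route buys an elementary, self-contained treatment avoiding Wright's machinery, at the cost of essentially re-deriving Proposition 1.4. One caveat on (1): you repeatedly use that a $p$-ideal is closed under passing to subprojections (heredity), e.g.\ when you place the range projections of the $a_jq_jb_j$, and finally $P$ itself, back into $L$. That closure property is part of Wright's actual definition but is missing from the paper's abbreviated paraphrase (closure under equivalence and under sups and infs); under the literal paraphrase the correspondence in (1) is false (for instance $L=\{0,1\}$ would be a $p$-ideal arising from no ideal), so you should state explicitly that heredity is assumed. With that definitional point made explicit, the remaining steps you flag as routine (that $I_2(\{\omega\})$ is a closed ideal, that $\langle\mathfrak m_\omega\rangle=I_\infty(\{\omega\})$) are indeed routine, and your trace-counting and tiling estimates check out.
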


The center  of $M$ consists of elements of the form $\oplus _{n=1}^\infty \gl_nI_n$, where  $\{\gl_n\}$ is a bounded complex  sequence 
and $I_n$ is the identity matrix in $M_n$.  In other words the center of $M$ can be identified with $\ell^\infty$ (which may also be  identified with $C(\gb\mathbb N)$, where $\gb\mathbb N$ denotes the Stone-Cech compactification of the natural numbers.

The first conclusion of the following Lemma is based on an idea of Sorin Popa.  Details of the proof were provided by David Sherman. The second conclusion is based on the proof of Theorem 1 of \cite{jp}.

\begin{lemma}  If $x\in M_n$  with $\spp(x)=0$ (i.e. $x$ has zero diagonal) and $G$ denotes the group of symmetries (i.e. self-adjoint unitary operators) in $D$, then

\begin{enumerate}
 
 \item There is $d\in G$ such that  
 \[
 \|xd-dx\|_2 \geq \sqrt2 \|x\|_2. 
 \]
 
 \item If $x=x^*$, there is $u \in G$ such that 
  \[
 \|xu-ux\| \geq \|x\|. 
 \]
 \end{enumerate}
\end{lemma}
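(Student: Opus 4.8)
The two statements are about finding a symmetry $d$ (resp.\ $u$) in $D$ that fails badly to commute with a given zero-diagonal matrix $x\in M_n$. My plan is to treat each part essentially by an averaging / second-moment argument over all diagonal symmetries, exploiting that averaging a conjugation by such symmetries over $D$ reproduces the conditional expectation $\spp$.

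\emph{Part (1): the $\|\cdot\|_2$ bound.} Let $d$ range over the $2^n$ diagonal sign matrices $d=\operatorname{diag}(\eps_1,\dots,\eps_n)$ with $\eps_i=\pm1$, equipped with the uniform probability measure $\mu$. The key identity is that for any $a\in M_n$, the average $\int d\,a\,d\,d\mu(d)$ equals $\spp(a)$, since $(d\,a\,d)_{ij}=\eps_i\eps_j a_{ij}$ and the average of $\eps_i\eps_j$ is $1$ if $i=j$ and $0$ otherwise. Now compute the average of $\|xd-dx\|_2^2$ over $d$. Expanding, $\|xd-dx\|_2^2 = \tau_n\big((xd-dx)^*(xd-dx)\big) = \tau_n(dx^*xd) + \tau_n(x^*dxd) - \tau_n(dx^*dx) - \tau_n(x^*d\,d\,x)$. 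Wait—more cleanly, write $\|xd-dx\|_2^2 = 2\|x\|_2^2 - 2\,\mathrm{Re}\,\tau_n(dx^*dx)$ using that $d$ is a symmetry. Averaging over $d$ and using the averaging identity, $\int \tau_n(dx^*dx)\,d\mu(d) = \tau_n\big(\spp(x^*)x\big) = \tau_n\big(\spp(x)^*x\big) = 0$ because $\spp(x)=0$ (here $\tau_n(\spp(x^*)x)=\tau_n(\spp(x^*)\spp(x))$ since $\tau_n=\tau_n\circ\spp$, and $\spp(x)=0$). Therefore $\int \|xd-dx\|_2^2\,d\mu(d) = 2\|x\|_2^2$, so some $d$ in the support achieves $\|xd-dx\|_2^2 \ge 2\|x\|_2^2$, which is the claim. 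To get an element of $G\subseteq D$ (rather than just $M_n$), pad $d$ with the identity on all other summands.

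\emph{Part (2): the operator-norm bound when $x=x^*$.} Here I would follow the cited argument from Theorem~1 of \cite{jp}. The idea: choose a unit vector $\xi\in\mathbb C^n$ with $\|x\xi\|$ close to $\|x\|$ (achievable exactly since we are in finite dimensions, so $\|x\xi\| = \|x\|$ for an eigenvector $\xi$ of $x$ with $|{\rm eigenvalue}|=\|x\|$). We want a diagonal symmetry $u$ with $\|(xu-ux)\xi\|$ or $\|(xu-ux)\eta\|$ at least $\|x\|$ for a suitable unit vector. The natural move is to pick $u=\operatorname{diag}(\eps_1,\dots,\eps_n)$ where $\eps_i$ is chosen according to the sign/phase of the coordinates of $\xi$ versus those of $x\xi$: since $x$ has zero diagonal, $\langle x\xi,\xi\rangle$'s diagonal contributions vanish, which forces $x\xi$ to have a large component orthogonal to $\xi$ in a way one can detect coordinatewise. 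Concretely, comparing $xu\xi$ and $ux\xi$ and using $\sum_i \eps_i|\xi_i|^2$ type cancellations, one extracts $\|xu\xi - ux\xi\|\ge \|x\|$. The phases need care when $x$ is merely self-adjoint over $\mathbb C$ (coordinates of $\xi$ and of $x\xi$ are complex); one may first reduce to the case where $\xi$ has nonnegative real entries by absorbing a diagonal unitary, but that diagonal unitary need not be a symmetry—so instead one works directly, choosing $\eps_i=\operatorname{sgn}\big(\mathrm{Re}(\overline{\xi_i}(x\xi)_i)\big)$ or similar, and estimates. I expect this sign-selection estimate to be the main obstacle: the averaging trick of Part~(1) gives an $\ell^2$-average statement but not an $\ell^\infty$ (operator norm) statement, so Part~(2) genuinely requires the more hands-on vector argument of \cite{jp}, and getting the complex phases to line up so that the bound is exactly $\|x\|$ (not $\|x\|/\sqrt2$ or similar) is the delicate point. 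Again, once $u\in M_n$ is found, pad with the identity to land in $G$.
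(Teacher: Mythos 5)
Your Part (1) is correct and is essentially the paper's argument: a second-moment average of $\|xd-dx\|_2^2$ over the diagonal sign matrices, using that conjugation-averaging reproduces $\spp$ and that $\spp(x)=0$; the paper just does the computation entrywise, getting $(1/n)\sum_{i\ne j}|x_{ij}|^2|d_i-d_j|^2$ with average $2\|x\|_2^2$, while you use the trace identity $\int\tau_n(dx^*dx)\,d\mu=\tau_n(\spp(x^*)x)=0$. Either way the conclusion follows, so this part stands.

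Part (2), however, has a genuine gap: you only sketch a sign-selection argument on an eigenvector $\xi$ and explicitly leave the key estimate (``one extracts $\|xu\xi-ux\xi\|\ge\|x\|$'', with the complex phases ``the delicate point'') unproven, so as written nothing is established. Moreover, your guiding claim --- that averaging ``gives an $\ell^2$-average statement but not an operator norm statement, so Part (2) genuinely requires the more hands-on vector argument'' --- is exactly what the paper refutes: its proof of (2) is a first-moment averaging argument in operator norm. Since $u^2=1$, one has $u(ux-xu)=x-uxu$, and $\int_G uxu\,d\mu(u)=\spp(x)=0$, hence $x=\int_G u(ux-xu)\,d\mu(u)$ and therefore $\|x\|\le\int_G\|ux-xu\|\,d\mu(u)\le\max_{u\in G}\|ux-xu\|$ (the maximum is attained, $G$ acting on $M_n$ being finite). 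No eigenvector or phase bookkeeping is needed. For what it is worth, your route can be repaired by a second-moment average at the vector level: with $x\xi=\lambda\xi$, $|\lambda|=\|x\|$, averaging $\|(x-\lambda)u\xi\|^2=\langle(x-\lambda)^*(x-\lambda)u\xi,u\xi\rangle$ over the sign choices kills the off-diagonal terms and, since $x$ has zero diagonal, yields $\sum_i(x^*x)_{ii}|\xi_i|^2+|\lambda|^2\ge\|x\|^2$, so some sign matrix $u$ gives $\|(xu-ux)\xi\|\ge\|x\|$. But that step (or the paper's identity above) is precisely what is missing from your proposal.
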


\begin{proof}
Direct calculation (using $\spp(x)=0$) shows that for any
diagonal element $d$  with diagonal entries $(d_1, ..., d_n) \in G,$
\[
\|xd-dx\|_2^2 =  (1/n) \sum_{i \ne j} |x_{ij}|^2  |d_j - d_i|^2 .
\]

Choose Haar measure $\mu$ on $G$  to have total mass 1.  Because each $d_i$  takes the
values $+1, -1$ with equal probability and independently, $|d_j - d_i|^2 = 4$ or $0$ with equal  probability
for $i \ne j$,  so $i \ne j$,
 $\int_G |d_j - d_i|^2
 d\mu(d) = 2.$  Thus

$$\int_G 
\|xd-dx\|_2^2 d\mu(d) =
\|xd-dx\|_2^2 =  (1/n) \sum_{i \ne j} |x_{ij}|^2\int_G |d_j - d_i|^2
 d\mu(d) $$
     $$=   (1/n) \sum_{i\ne j} 2 |x_{ij}|^2 d\mu =  2 \|x\|_2^2.$$
Since $\mu(G)=1$, for some  $d \in G, \|xd-dx]\|_2^2 \ge2 \|x\|_2^2.$

For the  second assertion we may assume that $\|x\| =1$.  Since  $ux-xu$ has zero diagonal for any $u \in G, u(ux-xu)=x-uxu$ also has zero diagonal.   Thus $\int_G (x-uxu)\ d\mu(u)$  also has zero diagonal, as does $uxu$.  Next observe that
 $ \int_G uxu\,d\mu(u) $ is a diagonal matrix (by direct calculation) .  Since  $ \int_G uxu\,d\mu(u) $ also has zero diagonal (as above), it is 0.  This means that 
 \[
 \int_G u(ux-xu)\ d\mu(u)=\int_G x \ d\mu(u) - \int_G uxu \ d\mu(u) = x.
 \]
 Thus $1=\|x\| \le \int_G \|ux-xu\| \  d\mu(u)$, so (since $\mu(G)=1$) $\|ux-xu\| \ge 1$, for some  $u \in G$.

\bigskip

\end{proof}

\bigskip

\begin{notation}Since  the center $Z$ of $M$ may be identified with $\ell^\infty = C(\gb \mathbb N)$, a closed ideal $\Gamma$ in the center is determined a closed  subset  of $\gb\mathbb N$.
Let $\Omega$ denote such a closed subset and define two ideals in $M$ as follows.  Write
\begin{align*}
I_2(\Omega) &= \{x= \oplus x_n:  \lim_{\omega} \|x_n\|_2 = 0, \omega \in \Omega\} \text{ and }\\
 I_\infty(\Omega) &= \{ x= \oplus x_n :\lim_{\omega} \|x_n\|  =0,\omega\in \Omega\}.
\end{align*}
It is clear that $I_2(\Omega)  \cap Z(M)= I_\infty(\Omega)\cap Z(M)  = \Gamma$.
\end{notation}
\bigskip

\begin{proposition} Incorporating the notation above, $I_2(\Omega)$ is the maximal ideal $J$ such that $J\cap Z(M)=\Gamma$  and $I_\infty(\Omega)$ is the minimal such ideal.  
\end{proposition}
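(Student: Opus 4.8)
The plan is to prove the two assertions separately, using three elementary facts about a C$^*$-algebra $A$ with norm-closed two-sided ideal $J$ and quotient map $\pi\colon A\to A/J$: (i) if $0\le a\le b$ in $A$ and $b\in J$ then $a\in J$ (apply $\pi$: $0\le\pi(a)\le\pi(b)=0$); (ii) if $x^{*}x\in J$ then $x\in J$ (since $\pi(x)^{*}\pi(x)=0$); (iii) if $a\in J$, $a\ge0$ and $g$ is continuous with $g(0)=0$, then $g(a)\in J$. I also use the dictionary $Z(M)=\ell^{\infty}=C(\gb\mathbb N)$: a projection of $Z(M)$ has the form $z_{S}:=\oplus_{n\in S}I_{n}$ for some $S\subseteq\mathbb N$, the Gelfand transform of $\{\gl_{n}\}$ takes the value $\lim_{\om}\gl_{n}$ at $\om\in\gb\mathbb N$, the clopen subsets of $\gb\mathbb N$ are exactly the sets $\overline S$ with $S\subseteq\mathbb N$, and $z_{S}\in\Gm$ iff $\overline S\cap\Omega=\emptyset$. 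The assertions that $I_{2}(\Omega),I_{\infty}(\Omega)$ are closed two-sided ideals and that $I_{\infty}(\Omega)\cap Z(M)=I_{2}(\Omega)\cap Z(M)=\Gm$ follow routinely from $\|ab\|_{2}\le\|a\|\,\|b\|_{2}$, $\|ab\|_{2}\le\|a\|_{2}\|b\|$, $\|a_{n}\|_{2}\le\|a_{n}\|\le\|a\|$ and norm-closedness; I would dispose of these first.

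\emph{Minimality of $I_{\infty}(\Omega)$.} Let $J$ be any ideal with $J\cap Z(M)=\Gm$; then $\Gm\subseteq J$, and it suffices to show $I_{\infty}(\Omega)\subseteq J$. Given $x=\oplus x_{n}\in I_{\infty}(\Omega)$, note $x^{*}x\in I_{\infty}(\Omega)$ (as $\|x_{n}^{*}x_{n}\|=\|x_{n}\|^{2}$), so by (ii) we may pass to $x^{*}x$ and assume $x\ge0$. The bounded sequence $\{\|x_{n}\|\}$ has Gelfand transform vanishing on $\Omega$, so the central element $z:=\oplus_{n}\|x_{n}\|I_{n}$ lies in $\Gm\subseteq J$; since $0\le x_{n}\le\|x_{n}\|I_{n}$ for every $n$, fact (i) gives $x\in J$.

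\emph{Maximality of $I_{2}(\Omega)$.} Let $J$ be an ideal with $J\cap Z(M)=\Gm$, and suppose toward a contradiction that some $x\in J$ is not in $I_{2}(\Omega)$, so that $\lim_{\om_{0}}\|x_{n}\|_{2}=c>0$ for some $\om_{0}\in\Omega$. Put $a=x^{*}x\in J$ and $N=\|a\|$, so $\tau_{n}(a_{n})=\|x_{n}\|_{2}^{2}$ and $\lim_{\om_{0}}\tau_{n}(a_{n})=c^{2}$. Fix a continuous $h\colon[0,N]\to[0,1]$ with $h\equiv0$ on $[0,c^{2}/4]$ and $h\equiv1$ on $[c^{2}/2,N]$; then $b:=h(a)=\oplus h(a_{n})\in J$ by (iii), $b$ is a positive contraction, and $b_{n}\ge\mathbf 1_{[c^{2}/2,\infty)}(a_{n})$. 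From $a_{n}\le N\,\mathbf 1_{[c^{2}/2,\infty)}(a_{n})+(c^{2}/2)I_{n}$ we get $\tau_{n}(b_{n})\ge(\tau_{n}(a_{n})-c^{2}/2)/N$, hence $\lim_{\om_{0}}\tau_{n}(b_{n})\ge c^{2}/(2N)=:2\ga>0$. Let $S=\{n:\tau_{n}(b_{n})>\ga\}$; since the Gelfand transform of $\{\tau_{n}(b_{n})\}$ is $\ge2\ga$ at $\om_{0}$ and $\mathbb N$ is dense in $\gb\mathbb N$, the open set where this transform exceeds $\ga$ contains $\om_{0}$ and meets $\mathbb N$ exactly in $S$, so $\om_{0}\in\overline S$, while $\tau_{n}(b_{n})>\ga$ for all $n\in S$.

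Now restrict to $S$: $b':=b\,z_{S}=\oplus_{n\in S}b_{n}\in J$. For $n\in S$ put $r_{n}=\mathbf 1_{[\ga/2,\infty)}(b_{n})\in M_{n}$; then $r_{n}\le(2/\ga)b_{n}$, and from $b_{n}\le r_{n}+(\ga/2)I_{n}$ we get $\tau_{n}(r_{n})\ge\tau_{n}(b_{n})-\ga/2>\ga/2$, i.e.\ $\rnk(r_{n})>(\ga/2)n$. Hence $r:=\oplus_{n\in S}r_{n}$ is a projection in $M$ with $0\le r\le(2/\ga)b'$, so $r\in J$ by (i). Set $K=\lceil 2/\ga\rceil$; then $K\,\rnk(r_{n})>n$ for every $n\in S$, so in $M_{n}$ the identity $I_{n}$ is a sum of at most $K$ mutually orthogonal projections, each of rank $\le\rnk(r_{n})$ and hence equal to $vv^{*}$ for some partial isometry $v\in M_{n}$ with $v^{*}v\le r_{n}$. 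Collecting these (over $n\in S$, padded by $0$ off $S$ and up to index $K$) into elements $v_{1},\dots,v_{K}\in M$, we have $v_{j}^{*}v_{j}\le r\in J$, so $v_{j}^{*}v_{j}\in J$ by (i), hence $v_{j}=v_{j}(v_{j}^{*}v_{j})\in J$ and $v_{j}v_{j}^{*}\in J$; therefore $z_{S}=\oplus_{n\in S}I_{n}=\sum_{j=1}^{K}v_{j}v_{j}^{*}\in J$. But then $z_{S}\in J\cap Z(M)=\Gm$, forcing $\overline S\cap\Omega=\emptyset$ and contradicting $\om_{0}\in\overline S\cap\Omega$. Hence $J\subseteq I_{2}(\Omega)$, which completes the proof. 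The soft parts are the minimality statement and the verification that $I_{2}(\Omega),I_{\infty}(\Omega)$ are ideals; the main obstacle is the last paragraph, namely manufacturing from a single $x\in J$ of positive ``$2$-norm density'' along $\om_{0}$ an honest central projection of $J$ detecting $\om_{0}$. The point that makes this work — and has no analogue in $B(H)$ — is that each $M_{n}$ is finite-dimensional, so a projection of proportional rank $\ga n$ covers $I_{n}$ using only $\lceil 1/\ga\rceil$ conjugates, a bound independent of $n$.
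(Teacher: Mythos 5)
Your proof is correct and follows essentially the same route as the paper's: for maximality you pass from $x$ to a spectral projection of rank proportional to $n$ over an index set $S$ with $\om_0\in\overline S$, cover the central projection $z_S$ by boundedly many conjugates via partial isometries to force $z_S\in J\cap Z(M)\setminus\Gm$, and for minimality you use $\Gm\subseteq J$ together with domination by central elements (the paper phrases this as $\Gm$ containing an approximate unit for $I_\infty(\Omega)$). Your write-up simply supplies the quantitative details the paper leaves implicit.
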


\begin{proof}  If $x \in M$, it is clear that $||x_n||_2 \le ||x_n||\ \  \forall n$, hence $I_\infty(\Omega) \subset I_2(\Omega)$.   Now fix an ideal $J$ with 
\[
J\cap Z(M) = \Gamma.
\]
Since $\Gamma$ contains an approximate unit for   $I_\infty(\Omega)$, $J $ must  contain $I_\infty(\Omega)$.

   For the maximality claim, by contradiction suppose that there is a self-adjoint element  $a \in J$  and $\omega \in \Omega$ such that $\ds\lim_\omega\|a_n\|_2 \ne 0$.  Using spectral theory, we may replace $a$ by a projection $p$.  Since the 2-norm is continuous on $\gb\mathbb N$, there are a central projection $r \in M$ and a natural number $k$  such that $\tau_n(rp) \ge (1/k)\tau_n(r)\ \forall n$, and $\ds  \lim_{\omega}\|r_n\|_2 =1$. Since $J$ is an ideal, there are partial isometries $v_1, ... v_k \in M$ such that $\sum_1^k v_j^*pv_j \ge r$, hence $r \in J$.  This means that $J \cap Z(M) $ is strictly larger than $\Gamma$ which gives  the desired  contradiction.
\end{proof}

Notation:  For any ideal $J$ of $M$,  $x_J$ denotes  the support projection
of $J$ in $M^{**}$,   $y_J$ denotes the supremum in $M^{**}$ of projections in
$D\cap J$, and $z_J$ denotes the support projection in $M^{**}$ of $J\cap Z(M)$. Clearly $x_J \ge y_J \ge z_J$.  Also, if $a$ is a self--adjoint element of $M$ and  $\gs$ is a measurable subset of $\mathbb R$, we write $\chi_\gs(a)$ for the spectral projection of $a$ determined by $\gs$.

\begin{proposition} The following statements hold for an ideal $J$ of $M$.
\begin{enumerate}
\item  $a \in (J)_{sa}$ if and only if $a \in M_{sa}$ and  $1-\chi_{(-1/n,
1/n)}(a) \in J$ for all $n$.

%\item $J\cap D$ is the closed ideal of $D$
%generated by the projections in $D\cap P_c$.

\item  $J\cap D$ is a MASA in $J$.

\item   $y_J$ is dense in $x_J$.

\item   $y_J$ is regular in $J^{**}$.

\item   $\scp(J)\subset J\cap D$.

\item   If $J=I_2(\Omega)  \text{ or }   I_\infty(\Omega), \,\,\,  (D+J)/J$ is a MASA in $M/J$.

\end{enumerate}
\end{proposition}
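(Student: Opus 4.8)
The six assertions split into two groups: parts (1), (2) and (6) come fairly directly from the structure of $M$ together with the preceding Lemma, while parts (3), (4) and (5) all turn on the single equality $x_J=y_J$, which is where the real content lies.

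For (1), note first that $M$, being a direct sum of finite--dimensional algebras, contains every spectral projection of each of its self--adjoint elements. If $a\in (J)_{sa}$, choose for each $n$ a continuous function $f_n$ with $f_n\equiv 0$ on $(-1/2n,\,1/2n)$ and $f_n\equiv 1$ off $(-1/n,1/n)$; then $f_n(a)=a\,k_n(a)\in J$, where $k_n$ is the continuous function with $f_n(t)=tk_n(t)$, and $1-\chi_{(-1/n,1/n)}(a)=\bigl(1-\chi_{(-1/n,1/n)}(a)\bigr)f_n(a)\in J$ since the first factor lies in $M$. Conversely, if $1-\chi_{(-1/n,1/n)}(a)\in J$ for all $n$, then $a\bigl(1-\chi_{(-1/n,1/n)}(a)\bigr)\in J$ and these converge in norm to $a$, so $a\in J$. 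For (2): each $M_n$ is simple, so $J\cap M_n$ is $\{0\}$ or $M_n$. If $b\in J$ commutes with $D\cap J$ and $b_n\neq 0$, then multiplying $b$ by the central projection of $M$ that equals the identity on the $n$th summand shows $b_n\in J\cap M_n$, hence $M_n\subseteq J$ and the whole diagonal $D\cap M_n$ of $M_n$ lies in $D\cap J$; so $b_n$ commutes with $D\cap M_n$ and is therefore diagonal. Thus $b\in D\cap J$, so $D\cap J$ is maximal abelian in $J$.

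Assertions (3) and (4) are immediate once $x_J=y_J$ is known, since then $y_J=x_J$ is a central — hence simultaneously open and closed — projection of $M^{**}$. To obtain $x_J=y_J$, first recall that by Wright's theorem $J$ is generated by its projections, and that every projection $q=\oplus q_n$ of $M$ is equivalent in $M$ to a diagonal projection (in $M_n$, $q_n$ is unitarily equivalent to the standard diagonal projection of rank $\operatorname{rank}(q_n)$); that diagonal projection lies in $J$ exactly when $q$ does and generates the same ideal, so $J$ is in fact the closed ideal generated by $D\cap J$. It then remains to see that every projection $q\in J$ satisfies $q\le y_J$. Let $\widehat q$ be the smallest diagonal projection above $q$; a direct computation (using $(q_n)_{ii}=\|q_ne_i\|^2$) shows that $\widehat q$ equals the support projection of the diagonal $\scp(q)$ of $q$. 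Hence if $\scp(q)\in D\cap J$ — the special case of (5) for $q$ — then $q\le\widehat q\le y_J$, as desired. So (3), (4) and (5) are essentially equivalent, and the substantive point is (5): $\scp(J)\subseteq J$.

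For (5), since $J$ and the statement are self--adjoint and $\scp$ is positive, it suffices to show $\scp(a)\in J$ for $a=a^{*}\in J$. The element $e:=a-\scp(a)=\oplus\bigl(a_n-\scp(a_n)\bigr)$ has zero diagonal in every block, so part (2) of the preceding Lemma furnishes $u_n\in G_n$ with $\|a_nu_n-u_na_n\|=\|e_nu_n-u_ne_n\|\ge\|e_n\|$; putting $u=\oplus u_n\in G\subseteq D$ we get $au-ua\in J$ with $\|e_n\|\le\|(au-ua)_n\|$ for every $n$. The remaining issue is that a closed ideal of $M$ is \emph{solid}: if $\|e_n\|\le\|c_n\|$ for all $n$ and $c\in J$, then $e\in J$. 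For $J=I_\infty(\Omega)$ this is immediate from the defining block--wise operator--norm condition; for $J=I_2(\Omega)$ one instead applies part (1) of the Lemma (which needs no self--adjointness), replaces $\|\cdot\|$ by $\|\cdot\|_2$ throughout, and uses the block--wise $\|\cdot\|_2$ condition. For a general ideal $J$ one needs solidity with respect to an appropriate unitarily invariant norm (the commutator estimate behind part (2) of the Lemma in fact holds, by the same averaging argument, in every unitarily invariant norm); extracting such a norm from Wright's classification of the ideals of $M$ is the step I expect to be the main obstacle. Granting solidity, $e\in J$ and $\scp(a)=a-e\in J$.

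Finally, for (6), let $J=I_\infty(\Omega)$ or $I_2(\Omega)$ and suppose $b\in M$ has $bd-db\in J$ for all $d\in D$. Writing $b=b_1+ib_2$ with $b_j=b_j^{*}$ and splitting $bd-db$ into its self--adjoint and skew parts (using that $J$ is self--adjoint) shows $b_jd-db_j\in J$ for all $d\in D$, so we may assume $b=b^{*}$. For each $n$, part (2) of the Lemma applied to the zero--diagonal self--adjoint $b_n-\scp(b_n)$ gives $u_n\in G_n$ with $\|b_nu_n-u_nb_n\|=\|(b_n-\scp(b_n))u_n-u_n(b_n-\scp(b_n))\|\ge\|b_n-\scp(b_n)\|$; put $u=\oplus u_n\in G\subseteq D$. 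Then $bu-ub\in J$, and solidity of $I_\infty(\Omega)$ in operator norm (the case $J=I_2(\Omega)$ uses part (1) and $\|\cdot\|_2$ in the same way) forces $b-\scp(b)\in J$, i.e.\ $b\in D+J$. Since $(D+J)/J$ is abelian and we have just shown its relative commutant in $M/J$ is contained in it, it is a MASA in $M/J$.
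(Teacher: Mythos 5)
Your parts (1), (2) and (6) are fine and essentially reproduce the paper's arguments (in (6) you are in fact a bit more careful than the paper, reducing to the self-adjoint case before invoking part (2) of Lemma 1.2). The trouble is the hinge you chose for (3), (4) and (5): the equality $x_J=y_J$ is simply false for general ideals of $M$. The paper itself proves this: by Proposition 3.2 and Corollary 3.4, any proper ideal containing the projection $p_B$ (e.g.\ $I_2(\gb\mathbb N\setminus\mathbb N)$) has $y_J\ne x_J$, because the diagonal projections in $J$ cannot form an approximate unit for $p_B$. The precise place your derivation breaks is the step ``$q\le\widehat q\le y_J$'': knowing $\scp(q)\in D\cap J$ only puts the spectral projections $\chi_{[1/k,1]}(\scp(q))$ into $D\cap J$; their supremum in $M^{**}$ is the open projection of the hereditary subalgebra generated by $\scp(q)$, which is in general strictly smaller than the support projection $\widehat q$ computed inside $M$, and it need not dominate $q$ (for $q=p_B$ one has $\scp(p_B)\in D\cap J$ while $p_B\not\le y_J$). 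Consequently (3) has to be read as ``the closure of $y_J$ is $x_J$,'' which the paper gets from (2) by a short orthogonality argument (a nonzero positive $a\in J$ orthogonal to every projection of $D\cap J$ would commute with $D\cap J$ yet not lie in it), and (4) is not the triviality ``central projections are regular'' but an application of the cited regularity lemma of Akemann--Eilers, using that $y_J$ dominates the central cover of $J\cap I_{\beta\mathbb N\setminus\mathbb N}$.

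Your (5) also has a genuine gap, which you flagged yourself: it needs a ``solidity'' property of an arbitrary ideal with respect to a unitarily invariant norm, and no such property holds. Blockwise operator-norm domination solidity fails outright: an abelian projection $c$ with $\operatorname{rank}(c_n)=1$ lies in $I_{ab}$ and has $\|c_n\|=1$ for all $n$, yet a projection $e$ with $\operatorname{rank}(e_n)\to\infty$ satisfies $\|e_n\|\le\|c_n\|$ and is not in $I_{ab}$; Wright's classification gives closure of the projection ideal under equivalence and sub-projections, not under norm domination. The paper's proof of (5) avoids commutator estimates entirely: for a projection $p\in J$ it sets $q=\chi_{(1/k,1]}(\scp(p))$, uses the trace/rank inequality $\operatorname{rank}(p_n)\ge (1/k)\operatorname{rank}(q_n)$ to build from unitary conjugates of $p$ a projection $r\in J$ with $\operatorname{rank}(r_n)\ge\operatorname{rank}(q_n)$, conjugates $r$ above $q$ to conclude $q\in J$, and lets $k\to\infty$; general elements of $J$ are then handled because they are norm limits of combinations of projections in $J$ (via part (1) and Wright's theorem). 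So the rank-comparison route through the $p$-ideal structure, not a norm-solidity argument, is what makes (5) work for arbitrary ideals.
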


\begin{proof}
Fix $a = a^*$ in $J$.   Since ideals are
hereditary and 
\[
na \ge 1-\chi_{(-1/n, 1/n)}(a),
\]
it follows that   $1-\chi_{(-1/n, 1/n)}(a)\in J$.

Next suppose $a$ is a self adjoint element of $ M$ and  $p_n = 1-\chi_{(-1/n, 1/n)}(a)$ is in  $J$ for all $n$.   Since $J$ is norm closed, $ap_n$ is in $J$  and
$\| a- ap_n\| \le 1/n$, it follows that $a \in J$.  So the assertion in part (1) is true.

%For the assertion in $(2)$ observe that the algebra of diagonal matrices is the direct sum of one-dimensional matrix algebras so that Theorem 1.1 holds for $D$.  Since $J\cap D$ is an ideal in $D$ it is generated by its projections by part $(2)$ of Theorem 1.1.
%So assertion (2) holds. 

Now fix a self-adjoint element $a$ in $J$ that commutes with elements of $J\cap D$. Since $J$ contains the operators of finite rank,
$a$ must be diagonal, so $a\in J \cap D$ and therefore assertion (2) is true.

Next if the assertion in (3)  is false, then there is a self
adjoint $a \in J$ that is orthogonal to
all the projections in $D \cap J$. 
This contradicts part (2).

(4) follows immediately from \cite {regularity} Lemma 4.1(iii), because $y_J$ dominates the central cover of $J \cap I_{\beta(N)\setminus N}$.

For (5), let $p \in J$ be a projection.  Fix an integer $k > 0$, and write 
\[q = \chi_{(1/k,1]}(\spp(p)). 
\]
  Then it  suffices to show that $q \in J$ since $||q\spp(p) - \spp(p)|| < 1/k$ and $J$ is norm closed.  Now observe that

\[
\text{rank}(p_n)=n\tau_n(p_n) =n \tau_n(\spp(p)_n)\ge (n/k)\tau_n(q_n)\ge (1/k) \text{rank}(q_n)\tag{$*$}
\]
and define a projection $r \in M$ as follows.  If rank$(p_n)\ge n/k$, then set $r_n=1_n$.  If  rank$(p_n)< n/k$, then 
choose  unitaries $\{v_j\}_1^k \in M$ such that for each $n, 
r_n:=\sum_{j=1}^k v_jp_nv_j^*$ is a projection; necessarily rank$(r_n)$= $k$ rank$(p_n)$.

We claim that $r \in J$.  Let $s\in M$ be the central projection defined by $s_n=r_n$ if $r_n=1_n$, and $s_n=0$ if not.  Clearly $s \in J$ since there exist unitaries $w_1, ..., w_k$ in $M$ such that $\sum_1^k w_npw_n^* \ge s$. Thus $sr \in J$ and $(1-s)r \in J$ by its construction as a finite sum of projections in $J$. This establishes our claim.

 Now note that   $\text{rank}(r_n) \ge \text{rank}(q_n)$ for all $n$ by $(*)$ above.  Thus there is a unitary $U \in M$ such that $UrU^* \ge q$, and hence $q \in J$.  Since $k$ was arbitrary and $J$ is closed, it follows that we also have  $P(p) \in J$, as desired.

For (6),  suppose first that $ J = I_\infty(\Omega)$, then fix   $x$ in $M$ such that $x$ has  0 diagonal and $xd-dx =0$ in the quotient $M/J$ for all $d \in D$.  Thus $\ds\lim_\omega\|dx-xd\|_\infty=0 $ for all  $\omega\in \Omega$ and for all $d \in D$. By Lemma 1.2, $\ds\lim_{\omega}\|x\|_\infty=0$ for every $\omega$ in $\Omega$, i.e. $x \in I_\infty(\Omega) $.  This gives the result.

If $J=I_2(\Omega)$, then fix an element  $x$ in  $M$ such that $x$ has 0 diagonal and $xd-dx\in J$ for all $d \in D$.  Thus $\ds\lim_{\omega}\|xd-dx\|=0$ for each  $\omega$ in  $\Omega$ and for all $d \in D$. By Lemma 1.2, $\lim_{\omega}\|x\|_2 =0$ for each  $\omega$ in $ \Omega$, so  $x$ is in $I_2(\Omega)$.

\end{proof}

\noindent {\bf Remarks.}

(a)  We know by Theorem 1.5.7 and Corollary 1.5.8 in \cite{gert}, that for any ideal $J$ of $M$, the algebra $J+D$ is norm closed and that its quotient $(J+D)/J$ is a C*-subalgebra of $M/J$. It follows that  if the quotient   $(D+J)/J$ is not a MASA of $M/J$, then our Kadison - Singer  Conjecture would be  false.  For this reason we take assertion (6) of the last theorem as more evidence for our conjecture.

(b) If an ideal $J$ has the form  $I_\infty$, then $y_J=x_J=z_J$.

\bigskip
\section{Some General C*-Algebra Results}
\bigskip

\bigskip

Recall that a MASA is a maximal abelian *subalgebra of a $C^*$- algebra, and, if $f$ is a state on 
a $C^*$- algebra $A$, then its {\em hereditary kernel}  is by definition
\[
K_f = \{x \in A : f(xx^*+x^*x)=0\}.
\]  
Recall that as defined in \cite{excising} a net $\{a_\ga\}$ of positive norm one elements in a $C^*$-algebra $A$ {\bf excises} a state $f$  of $A$ if 
\[
\lim_\ga \|a_\ga a a_\ga -f(a)a_\ga^2\| = 0  \qquad \text{for every $a$ in $A$}.
\]
We are interested in pure states, so we recall the basic facts. First by  Proposition 2.2 of \cite{excising}, if $f$ is a pure state of a C$^*$-algebra $A$, then its hereditary kernel is a maximal hereditary C$^*$-subalgebra. Further, the covering projection $p$ of this hereditary kernel is the limit in $A^{**}$ of its approximate unit, and $1-p$ (i.e. the support projection of $f$) has rank 1  (\cite{gert}, 3.13).

\bigskip

\begin{theorem}  If $f$ is a pure state of the C\,$^*$-subalgebra $B$ of $C$
 and $\{b_\alpha\}$ is any excising net for $f$ in $B$, then $f$
has unique state extension to $C$ if and only if $\{b_\alpha\}$ converges to a rank one
projection in $C^{**}$.
\end{theorem}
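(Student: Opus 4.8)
The plan is to translate the statement into a question about a single projection of $C^{**}$. Realize $B^{**}$ as a weak* closed $^*$-subalgebra of $C^{**}$ through the normal (hence weak*-continuous) extension of the inclusion $B\hookrightarrow C$, and write $\bar f$ for the normal state of $B^{**}$ extending $f$ and $q=q_f\in B^{**}\subseteq C^{**}$ for its support projection. Since $f$ is pure, $q$ is a minimal projection of $B^{**}$, equivalently $qB^{**}q=\mathbb{C}q$, and $\bar f$ is the unique normal state of $B^{**}$ supported under $q$; this is the rank-one statement recalled just before the theorem.

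\emph{Step 1: reformulate unique extension.} I claim $f$ has a unique state extension to $C$ if and only if $qC^{**}q=\mathbb{C}q$, i.e.\ $q$ is rank one in $C^{**}$. If $g$ is a state of $C$ with $g|_B=f$, its normal extension $\bar g$ to $C^{**}$ restricts on $B^{**}$ to a normal state agreeing with $f$ on $B$, hence to $\bar f$; in particular $\bar g(q)=1$, so $\bar g=\bar g(q\,\cdot\,q)$ and $g$ is determined by a normal state of $qC^{**}q$. Conversely, any normal state $\psi$ of $qC^{**}q$ yields the normal state $\psi(q\,\cdot\,q)$ of $C^{**}$, whose restriction to $B^{**}$ is a normal state supported under $q$, hence $\bar f$, so $\psi(q\,\cdot\,q)|_C$ extends $f$. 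Thus the state extensions of $f$ to $C$ are in bijection with the normal states of the von Neumann algebra $qC^{**}q$, and there is exactly one precisely when $qC^{**}q=\mathbb{C}q$.

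\emph{Step 2: bring in the excising net.} I would invoke (from \cite{excising}, or by a short argument from the hereditary kernel) that \emph{any} excising net $\{b_\alpha\}$ for the pure state $f$ converges weak* in $B^{**}$ to $q$, hence --- by weak*-continuity of $B^{**}\hookrightarrow C^{**}$ --- also weak* to $q$ in $C^{**}$. Given this, the theorem follows: if $f$ extends uniquely to $C$, then by Step 1 the limit $q$ of $\{b_\alpha\}$ is a rank-one projection of $C^{**}$; and if $\{b_\alpha\}$ converges in $C^{**}$ to a rank-one projection $e$, then uniqueness of weak* limits forces $e=q$, so $q$ is rank one in $C^{**}$ and Step 1 gives the unique extension.

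The identification $B^{**}\subseteq C^{**}$ and the bijection in Step 1 are routine facts about normal states and minimal projections. The substantive input is the convergence $b_\alpha\to q$ in $B^{**}$. Half of it is easy: for positive $a\in K_f$ the excising relation (with $f(a)=0$) gives $\|b_\alpha a b_\alpha\|=\|a^{1/2}b_\alpha\|^2\to 0$, while $a^{1/2}b_\alpha\to a^{1/2}b$ weak* for any weak* cluster point $b$ of $\{b_\alpha\}$, since multiplication by the fixed element $a^{1/2}$ is weak*-continuous; running $a$ through an approximate unit of $K_f$ (which tends weak* to $p=1-q$ by \cite{excising}) yields $pb=0$, so $b=qbq=\bar f(b)\,q$. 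The remaining point --- that the scalar $\bar f(b)=\lim f(b_\alpha)$ equals $1$ for \emph{every} excising net, so that all cluster points equal $q$ and the net actually converges --- is the delicate one, and is where I expect the real work (or the appeal to \cite{excising}) to concentrate.
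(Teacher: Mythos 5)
Your Step~1 reduction (unique extension of $f$ to $C$ holds iff the support projection $q$ of $f$ in $B^{**}\subseteq C^{**}$ satisfies $qC^{**}q=\mathbb{C}q$) is correct, and combined with your cluster-point computation it gives a complete, and in fact more careful, proof of the ``if'' direction than the paper's own: every weak* cluster point $b$ of $\{b_\alpha\}$ lies in $B^{**}$ and satisfies $(1-q)b=0$, hence $b=\bar f(b)\,q$; so if the net converges to a rank-one projection $e$ of $C^{**}$, then $e=q$, and Step~1 yields uniqueness. The paper instead argues directly that two extensions $g,h$ agree via the chain $g(c)=\lim_\alpha g(b_\alpha c b_\alpha)=g(pcp)=\lambda=h(c)$, suppressing the Cauchy--Schwarz estimates, which require $g(b_\alpha)\to\bar g(p)=1$, i.e.\ exactly your identification of the limit with the support projection; your route makes that step honest.

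The gap you flagged in Step~2 is, however, genuine and cannot be closed as stated: with the definition of excising used here it is \emph{not} true that every excising net for a pure state converges weak* to $q$ (equivalently, that $f(b_\alpha)\to 1$), and no such statement is available in \cite{excising}. Take $B=C=C[0,1]$, $f$ evaluation at $0$, and $b_n$ a positive norm-one bump supported in $(0,1/n)$ with $b_n(0)=0$; then $\|b_n g b_n - g(0)b_n^2\|=\sup_t b_n(t)^2|g(t)-g(0)|\to 0$ for every $g$, so $\{b_n\}$ excises $f$, yet $b_n\to 0$ weak*, not to a rank-one projection, even though $f$ trivially has unique extension. Thus the ``only if'' half of your argument (and, read literally, of the theorem) needs the extra normalization $f(b_\alpha)\to 1$, which is automatic for the canonical excising nets of \cite{excising} drawn from the face $\{a:0\le a\le 1,\ f(a)=1\}$; with it, your cluster-point argument does show $b_\alpha\to q$ and Step~1 finishes. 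For comparison, the paper's own proof of this half consists of the single assertion that otherwise ``the extension would not be unique,'' so what you have isolated is precisely the point at which the published argument is thin rather than a defect peculiar to your approach.
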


\begin{proof} Suppose $\{b_\ga\}$ converges to a projection $p$ of rank one in the double dual of $C$ and suppose
$g$  and $h$ are  states on $C$ that extend $f$.   If we fix $c \in C$ we have 
\[
\lim_\ga g(c) = \lim_\ga g(b_\ga cb_\ga) = g(pcp)= g(\gl p)= \gl = \lim_\ga h(b_\ga c b_\ga) = h(c)
\]

For the converse suppose that $f$ has a unique state extension $g$ to all of $C$. It follows that $\{b_\ga\}$ must converge to a projection of rank one in $C^{**}$, else the extension would not be unique.

\end{proof}

\begin{proposition} Let $A$ be a unital C*-algebra and $f$ a pure state on $A$.  There there is a MASA  of $A$ on which $f$ is pure and from which $f$ has unique state extension if and only if  the hereditary kernel of $f$ has an abelian approximate unit.

\end{proposition}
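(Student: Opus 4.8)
\emph{Plan.} I would deduce the proposition from Theorem~2.1, applied with the subalgebra there taken to be a MASA $D$ of $A$ and with the ambient algebra $C=A$. Write $q\in A^{**}$ for the support projection of $f$; since $f$ is pure, $q$ has rank one, $p:=1-q$ is the covering projection of $K_f$, every approximate unit of $K_f$ converges to $p$ in $A^{**}$, and $qaq=f(a)q$ for every $a\in A$ by minimality of $q$. For a C*-subalgebra $E\subseteq A$ I identify $E^{**}$ with the weak*-closure of $E$ in $A^{**}$; when $E$ is abelian this is an abelian von Neumann subalgebra of $A^{**}$.

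\emph{Sufficiency.} Assume $K_f$ has an abelian (increasing) approximate unit $\{e_\lambda\}$, and choose a MASA $D$ of $A$ containing the abelian C*-algebra generated by $\{e_\lambda\}$ and $1$. Since $e_\lambda\to p$ in $A^{**}$ and each $e_\lambda$ lies in the weak*-closed set $D^{**}$, we get $p\in D^{**}$, hence $q\in D^{**}$; as $q$ then commutes with $D$ we have $f(d)q=qdq=qd$ for every $d\in D$, and $f(dd')q=q\,dd'=f(d)f(d')q$, so $f|_D$ is multiplicative --- that is, pure on $D$. It follows that $K_f\cap D=\ker(f|_D)=:N$, so $\{e_\lambda\}\subseteq N$, and being an approximate unit of $K_f$ it is an approximate unit of $N$. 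Then $\{1-e_\lambda\}$ excises the character $f|_D$ in $D$: for $d\in D$ we have $d-f(d)1\in N$, so $\|(1-e_\lambda)(d-f(d)1)\|\to0$ and therefore $\|(1-e_\lambda)d(1-e_\lambda)-f(d)(1-e_\lambda)^2\|\to0$ (and the norm is $1$, since $e_\lambda\geq0$ lies in the kernel of the character $f|_D$, so no rescaling is needed). Finally $\{1-e_\lambda\}\to 1-p=q$, a rank-one projection in $A^{**}$, so Theorem~2.1 gives that $f|_D$, and hence $f$, has a unique state extension to $A$.

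\emph{Necessity.} Conversely, suppose $D$ is a MASA of $A$ on which $f$ is pure and from which $f$ extends uniquely to $A$. Then $f|_D$ is a character; put $N=\ker(f|_D)$ and let $\{e_\lambda\}$ be an increasing (hence abelian) approximate unit of the ideal $N$. Multiplicativity of $f|_D$ again gives $N\subseteq K_f$ and $K_f\cap D=N$, so $\{e_\lambda\}$ is an abelian net in $K_f$ and it suffices to show it is an approximate unit of $K_f$. Exactly as above, $\{1-e_\lambda\}$ excises $f|_D$ in $D$, so by Theorem~2.1 and the uniqueness hypothesis it converges in $A^{**}$ to a rank-one projection $r$. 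Since $f(r)=\lim f(1-e_\lambda)=1$ and $q$ is the support projection of $f$, we get $r\geq q$, hence $r=q$. Therefore $\sup_\lambda e_\lambda=1-r=p$, the covering projection of $K_f$. An increasing net of positive contractions inside a hereditary C*-subalgebra whose supremum (in the bidual) equals the covering projection is an approximate unit of that subalgebra; applied to $\{e_\lambda\}\subseteq K_f$ this completes the proof.

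\emph{The hard part.} The only step that is not essentially formal is the last one of the necessity argument: upgrading an approximate unit of the small abelian ideal $N$ to an approximate unit of the possibly large, non-abelian hereditary subalgebra $K_f$. Everything hinges on showing that $\sup_\lambda e_\lambda$, evaluated in $A^{**}$, is \emph{all} of the covering projection of $K_f$ --- equivalently, that no ``mass'' escapes, which would otherwise produce a limit of rank $\geq 2$ --- and this is precisely what uniqueness of the extension provides through Theorem~2.1. The remaining ingredient, that an increasing net of contractions with this supremum is genuinely an approximate unit of $K_f$, is a standard fact about hereditary C*-subalgebras.
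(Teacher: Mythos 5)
Your proof is correct and takes essentially the same route as the paper: enlarge the abelian approximate unit to a MASA and exploit the rank-one support/covering projection of the pure state together with Theorem 2.1 (excising nets) in both directions. You merely spell out steps the paper leaves implicit, in particular identifying the weak* limit of the approximate unit of $\ker(f|_B)$ with the covering projection of $K_f$ and then invoking the standard fact that an increasing net in a hereditary subalgebra converging to its covering projection is an approximate unit.
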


\begin{proof} Suppose that the hereditary kernel of $f$ has an abelian approximate unit $\{b_\alpha\}$  and let $B$ be any MASA of $A$ that contains $\{b_\alpha\}$. 
Write  $p$ in $A^{**}$ for  the limit of the $b_\alpha$'s.  Since $f $ is pure,  $p$ has
codimension 1 as  mentioned in the introduction to section 2.  Since $p$ also lies
in $B^{**}$, it has codimension 1 there also.  Since $f $ is supported on $1-p$ which  has  dimension 1 in $ B^{**}$, its restriction to $B$ is pure and it must have a unique
state extension.

Now suppose that  there there is a MASA $B$ of $A$ on which $f$ is pure and from which $f$ has unique state extension.  Then the approximate unit of $\{x \in B : f(xx^*+x^*x)=0\}$ must converge in $A^{**}$ to a projection of codimension 1, else state extensions of $f$ would not be unique.

\end{proof}

\bigskip

\begin{lemma}Let $A$ be a C*-algebra and $x$ be an open, dense (i.e. $\overline{x}=1)$,
central projection in $A^{**}$.  Let $S(A)$ (resp., $PS(A)$) denote the states (resp.,pure states) of $A$
and $S(A)_x$ (resp., $PS(A)_x$) denote those states (resp., pure states) that take the value 1 on $x$.
Then $S(A)_x$  (resp. $PS(A)_x$) is weak* dense in $S(A)$ (resp. $PS(A)$).
\end{lemma}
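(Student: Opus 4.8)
The plan is to show that $S(A)_x$ is convex and weak* dense in $S(A)$ first, and then derive the pure-state statement from the Krein-Milman theorem together with a standard face argument. Since $x$ is a \emph{central} open projection in $A^{**}$ with $\overline{x} = 1$, the set $S(A)_x = \{f \in S(A) : f(x) = 1\}$ is precisely the state space of the quotient (or hereditary subalgebra) $A$ cut by $x$; concretely, $x$ corresponds to an essential closed ideal $I$ of $A$, and $S(A)_x$ is the (weak* closed) set of states that annihilate the complementary part. So $S(A)_x$ is a weak* closed convex subset of $S(A)$; what must be shown is that it is weak* dense.

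For density, the first step is to take an arbitrary $f \in S(A)$ and a basic weak* neighborhood determined by finitely many elements $a_1, \dots, a_k \in A$ and $\epsilon > 0$, and produce $g \in S(A)_x$ with $|g(a_i) - f(a_i)| < \epsilon$ for all $i$. The key point is that, because $x$ is open and dense, there is an approximate unit $\{e_\lambda\}$ of the essential ideal $I = xA^{**}x \cap A$ converging weak* to $x$ in $A^{**}$. I would then define $g_\lambda(a) = \frac{f(e_\lambda a e_\lambda)}{f(e_\lambda^2)}$ (the denominator being eventually bounded away from $0$ since $f$ is a state on a unital algebra and $e_\lambda \to x$ with $f(x)$ possibly nonzero --- one must be slightly careful here, see below). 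Each $g_\lambda$ is a state; moreover $g_\lambda(x) = 1$ in the limit because $e_\lambda \le x$ forces $g_\lambda$ to be supported under $x$, hence $g_\lambda \in S(A)_x$ for each $\lambda$. Finally $g_\lambda(a) \to \frac{f(xax)}{f(x)}$, and since $x$ is central and dense this limit equals $\frac{f(xa)}{f(x)}$; one then checks this is weak*-close to $f(a)$ by approximating $a$ within the ideal, or more cleanly: since $I$ is an essential ideal and $\{e_\lambda\}$ an approximate unit, $\|a e_\lambda - a\| \to 0$ fails in general (that is the whole point --- $f$ need not be supported under $x$), so instead I would argue that the \emph{set} of states supported under $x$ already separates points of $A$ (because $I$ is essential, so the representation of $A$ on $\bigoplus_{g \in S(A)_x} H_g$ is faithful), and a faithful family of states whose convex hull is weak* closed must have weak* closed convex hull equal to all of $S(A)$ --- this is the cleanest route and avoids the denominator subtlety entirely.

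So the cleanest overall argument: (i) $S(A)_x$ is convex and weak* closed; (ii) since $x$ is dense, the direct sum $\pi = \bigoplus_{g \in S(A)_x} \pi_g$ of the associated GNS representations is faithful on $A$ (an element killed by every such $g$ would lie in the annihilator of $x$, which is $0$ since $\overline{x}=1$); (iii) therefore the weak* closed convex hull of $S(A)_x$ contains a faithful family, and by the bipolar theorem a weak* closed convex subset of $S(A)$ containing a separating family of states must be all of $S(A)$ --- hence $S(A)_x$ is weak* dense. (iv) For pure states: $S(A)_x$ is a weak* closed \emph{face} of $S(A)$ (it is the set where the affine function $a \mapsto 1 - f(x)$, extended suitably, vanishes), so its extreme points are exactly $PS(A) \cap S(A)_x = PS(A)_x$; since $\overline{S(A)_x}^{w*} = S(A)$, taking extreme points and using that the extreme points of a weak* compact convex set are weak* dense in the extreme points of its closure (Milman's partial converse to Krein-Milman) gives that $PS(A)_x$ is weak* dense in $PS(A)$.

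The main obstacle I expect is step (iv) --- specifically, being careful that $S(A)_x$ is genuinely a \emph{face} of $S(A)$ and invoking the right form of Milman's theorem so that one really recovers density of $PS(A)_x$ in $PS(A)$ rather than merely in $\overline{PS(A)_x}$. That $S(A)_x$ is a face holds because if $f = \tfrac12(f_1 + f_2)$ with $f \in S(A)_x$ and $f_i \in S(A)$, then $f_1(x) + f_2(x) = 2$ with each $f_i(x) \le 1$ forces $f_i(x) = 1$; so $f_i \in S(A)_x$. Then an extreme point of $S(A)$ lying in $S(A)_x$ is extreme in $S(A)_x$, and conversely an extreme point of the face $S(A)_x$ is extreme in $S(A)$ --- this is the standard "extreme points of a face are extreme in the whole body" fact. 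With that in hand, Milman's theorem applied to the weak* compact set $S(A)$ and its dense subset (convex hull of $S(A)_x$) yields the claim. The other minor technical point to nail down is faithfulness of $\pi$ in step (ii): if $a \ne 0$ then some pure state $h$ of $A$ has $h(a^*a) > 0$, but $h$ need not lie in $S(A)_x$; however $\overline{x} = 1$ means the ideal $I$ is essential, so $a \cdot I \ne 0$, hence some state supported on $I$ (equivalently, in $S(A)_x$) does not kill $a^*a$ --- this is where density of $x$ is used essentially and it should be recorded explicitly.
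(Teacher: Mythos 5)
The overall shape of your plan (density of $S(A)_x$ first, then a face/Milman argument for pure states) matches the paper, but the mechanism you propose for the density step contains genuine errors. First, $S(A)_x$ is \emph{not} weak* closed: since $x$ is open, $f\mapsto \tilde f(x)$ is only a supremum of continuous functions $f\mapsto f(a_\alpha)$ (for $a_\alpha\uparrow x$), hence lower semicontinuous, so $\{f: f(x)=1\}$ is a $G_\delta$, not a closed set. For example with $A=\ell^\infty$ and $x$ the central open projection supporting $c_0$, the point evaluations lie in $S(A)_x$ but their weak* limit points are singular states vanishing on $x$. Indeed, if $S(A)_x$ were weak* closed, the lemma's conclusion would force $S(A)_x=S(A)$, which is false in exactly the situations the paper applies it to. More seriously, the principle carrying your step (iii) --- that a weak* closed convex set of states containing a separating (faithful) family must be all of $S(A)$ --- is false: the singleton consisting of the tracial state of $M_2$ is convex, weak* closed and faithful, yet is not the whole state space. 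Faithfulness of $\bigoplus_{g\in S(A)_x}\pi_g$ only says the map $a\mapsto ax$ is injective; to make a Hahn--Banach separation work you need that compression by $x$ preserves suprema of self-adjoint elements, i.e.\ $\sup_{g\in S(A)_x}g(a)=\sup_{f\in S(A)}f(a)$, equivalently $\|ax\|=\|a\|$. That is precisely the regularity of the dense central open projection $x$ which the paper's proof invokes; it does follow from density ($a\mapsto ax$ is an injective, hence isometric, *-homomorphism of $A$ into $A^{**}x$, since a nonzero open projection dominated by the closed projection $1-x$ would contradict $\overline{x}=1$), but it is strictly stronger than faithfulness, and your bipolar argument does not supply it.

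The pure-state step also leans on the false closedness: $S(A)_x$ is indeed a face (your verification of that is fine), but it is not weak* compact, so the ``partial converse to Krein--Milman'' in the form you state it does not apply. What Milman's theorem actually gives is: if $S(A)=\overline{\mathrm{conv}}\,(PS(A)_x)$, then every extreme point of $S(A)$, i.e.\ every pure state, lies in $\overline{PS(A)_x}$. So besides density of $S(A)_x$ you still need that each state with $f(x)=1$ is a weak* limit of convex combinations of pure states which themselves take the value $1$ on $x$; this is the passage for which the paper cites Dixmier, Appendix B14, and it does not come for free from the face property alone, since limits of convex combinations are not controlled by a face the way exact convex combinations are.
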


\begin{proof}
Let $a_\alpha\uparrow x, a_\alpha \in A$.  The  complement of $S(A)_x$ (resp. $PS(A)_x$) in
 $S(A)$ (resp. $PS(A)$) consists of those $f$ such that $f(a_\alpha)=0$ for all $\alpha$,
and that is clearly closed.

That $S(A)=\overline{S(A)_x}$ follows since the central projection $x$ is regular.  The
conclusion $PS(A)=\overline{PS(A)_x}$ follows from $S(A)=\overline{S(A)_x}$ by
\cite[Appendix B14] {Dixmier}.

\end{proof}

\bigskip

 \section{The Ideal $I_{ab}$ Generated by the Abelian Projections}
\bigskip

Let $\iab$ denote the ideal generated by the abelian projections in $M$.  It is clear that a projection $p=\oplus p_n\in M$ is abelian if and only if  the rank of $p_n$ is 0 or 1 for each $n$.
In some sense, the existence the ideal $\iab$ is what makes $M$ so different from $B(H)$.  We define the projection $p_B \in I_{ab}$  by the condition that $(p_B)_n$ has $1/n$ in each of its entries.  Recall that 
\[
I_2(\gb \mathbb N\setminus \mathbb N) = \{ \oplus x_n: \lim_\omega \|x_n\|_2 = 0, \omega \in \bnn\},
\]
and let $K$ denote the compact operators in $M$, i.e. $K=I_\infty( \gb\mathbb N \setminus \mathbb N).$

\begin{proposition} The following statements hold.
\begin{enumerate}
\item $I_{ab} \subset I_2( \gb\mathbb N \setminus \mathbb N)$.

 \item Any ideal $J$ that is strictly larger than $K$ contains an infinite dimensional abelian projection.
 
 \item $K \neq I_{ab} \neq I_2( \gb\mathbb N \setminus \mathbb N)$.
 
 \end{enumerate}
\end{proposition}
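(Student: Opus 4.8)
The plan is to handle the three parts in order; parts (1) and (2) are short, and the only substantial point is the strict inequality $\iab\ne I_2(\bnn)$ in part (3).

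First, for (1): note that $I_2(\bnn)$ is exactly the set of $x=\oplus x_n$ with $\|x_n\|_2\to 0$, since a bounded nonnegative sequence has all of its free-ultrafilter limits equal to $0$ precisely when it tends to $0$. Any abelian projection $e=\oplus e_n$ has $\rnk(e_n)\le 1$, so $\|e_n\|_2^2=\tau_n(e_n)=\rnk(e_n)/n\le 1/n\to 0$, whence $e\in I_2(\bnn)$. Since $I_2(\bnn)$ is a norm-closed two-sided ideal, it contains the closed ideal $\iab$ generated by the abelian projections.

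Next, for (2): given $J\supsetneq K$, choose $b\in J\setminus K$ and set $a=b^*b$; then $a$ is self-adjoint, $a\in J$, and $a\notin K$ since $\|a_n\|=\|b_n\|^2$. Thus $\|a_n\|\not\to 0$, so there are $\eps>0$ and an infinite set $S\subseteq\mathbb N$ with $\|a_n\|>\eps$ for $n\in S$. Pick $k$ with $1/k<\eps$. By Proposition 1.5(1), $p:=1-\chi_{(-1/k,1/k)}(a)$ lies in $J$, and $p_n\ne 0$ for every $n\in S$ because $a_n$ has a spectral value outside $(-1/k,1/k)$. For each $n\in S$ let $q_n\le p_n$ be a rank-one projection, and put $q_n=0$ for $n\notin S$; then $q=\oplus q_n$ is an abelian projection with $q\le p$, so $q=qpq\in J$ because $J$ is a two-sided ideal. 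Since $q_n\ne 0$ for the infinitely many $n\in S$, $q$ is infinite dimensional.

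Finally, for (3): the inequality $K\ne\iab$ is witnessed by $p_B$ (or simply by the diagonal rank-one projection in each summand): it is an abelian projection, hence lies in $\iab$, but $\|(p_B)_n\|=1$ for all $n$, so it is not in $K$. For $\iab\ne I_2(\bnn)$, part (1) already gives $\iab\subseteq I_2(\bnn)$, so it suffices to exhibit $x\in I_2(\bnn)\setminus\iab$. I would take $x=\oplus x_n$ with $x_n$ any projection of rank $\lfloor\sqrt n\rfloor$ in $M_n$; then $\|x_n\|_2^2=\lfloor\sqrt n\rfloor/n\to 0$, so $x\in I_2(\bnn)$. To see that $x\notin\iab$, the key fact to establish is that every projection lying in $\iab$ has rank bounded uniformly over the summands: $\iab$ is the norm closure of the algebraic two-sided ideal generated by the abelian projections, and any element $y=\sum_{i=1}^m a_ie_ib_i$ of the latter (with the $e_i$ abelian) has $\rnk(y_n)\le m$ for all $n$; a projection $z$ with $\|z-y\|<1/2$ must then satisfy $\rnk(z_n)\le m$ for every $n$, since otherwise a dimension count in some summand produces a unit vector in the range of $z_n$ that is killed by $y_n$, contradicting $\|z_n-y_n\|<1/2$. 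Since $\rnk(x_n)=\lfloor\sqrt n\rfloor\to\infty$, it follows that $x\notin\iab$. (Alternatively one can invoke Wright's theorem from Theorem 1.1 and identify the $p$-ideal attached to $\iab$ with the set of uniformly bounded-rank projections, which again excludes $x$.) The main obstacle is precisely this bounded-rank claim for projections in $\iab$; everything else reduces to one-line computations together with Proposition 1.5(1) and the fact that $J$ is a two-sided ideal.
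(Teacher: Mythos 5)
Your proposal is correct and follows essentially the same route as the paper: the $\|p_n\|_2\le 1/\sqrt n$ bound for part (1), extraction of an infinite-rank projection (hence an infinite-dimensional abelian subprojection) from a non-compact element for part (2), and for part (3) a projection whose blockwise ranks grow while $\mathrm{rank}/n\to 0$, excluded from $\iab$ by the uniform rank bound on elements of the (algebraic) ideal generated by abelian projections together with the norm-$\ge 1$ estimate. Your write-up is in fact slightly more careful than the paper's in spelling out part (2) and in working with general sums $\sum a_ie_ib_i$ rather than linear combinations of abelian projections, but the ideas coincide.
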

\begin{proof}
For the first assertion, it suffices to show that every abelian projection lies in $I_{ \gb\mathbb N \setminus \mathbb N}$.  This is clear since for any abelian projection $p= \oplus p_n \in M$, 
where   $\| p_n\|_2 \le  1/\sqrt{n}$.

For the second assertion,  suppose the ideal $J$ contains a non-compact operator.
It follows that $J$ contains a projection of infinite rank which, in turn,  must dominate an infinite dimensional abelian projection.

For the third assertion, if $p$ is an abelian projection such that rank($p_n)=1$ for all $n$, then $p \in I_{ab}
\setminus K$. 

 If $q$ is a projection in $M$ such that rank($q_{2^n}) = n$ and rank($q_k)=0$ when $k$ is not a power of 2, then $q \in I_2( \gb\mathbb N \setminus \mathbb N)\setminus I_{ab}$ as follows.  
 $$\|q_{2^n}\|_2= \sqrt n/2^{n/2} \rightarrow_{n\to \infty} 0.$$  Thus $q \in  I_2( \gb\mathbb N \setminus \mathbb N)$. If $b$ is a self-adjoint  linear combination of $t$ abelian projections, then  for all $n$,
rank($b_n) \le t$.  Since rank($q_n) \to \infty, ||(b-q)_n|| \ge 1$ when rank$(q_n) > t$.  Thus $q$ can't be in $I_{ab}$.

\end{proof}

\begin{proposition}  If $\omega$ is a free ultrafilter  and $q= \oplus q_n$ is a   projection in 
$ I_2(\{\omega\}) \cap D$,  then 
\[
 \lim_{\omega}  \|(p_Bq)_n\|= 0.
\]
\end{proposition}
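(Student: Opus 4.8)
The plan is to reduce the statement to a one-line computation of the operator norm of each block $(p_Bq)_n$ in terms of the normalized trace $\tau_n(q_n)$, after which the conclusion is immediate from the definition of $I_2(\{\omega\})$.

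First I would record the two structural facts in play. By the definition of $p_B$, each block $(p_B)_n$ is the rank-one projection $\xi_n\xi_n^*$, where $\xi_n=n^{-1/2}(1,1,\dots,1)^T\in\mathbb C^n$ is the normalized all-ones vector, so that $\|\xi_n\|=1$ and $\|\xi_n\xi_n^*\|=1$. And since $q\in D$, each $q_n$ is a diagonal projection: $q_n=q_n^*=q_n^2$, with diagonal entries $0$ or $1$.

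Next I would compute, for each fixed $n$, using $(p_Bq)_n(p_Bq)_n^*=(p_B)_nq_n(p_B)_n$ (which holds because $q_n=q_n^*=q_n^2$), that $(p_B)_n=\xi_n\xi_n^*$ is a norm-one projection, and that the scalar $\xi_n^*q_n\xi_n$ is $\ge 0$:
\[
\|(p_Bq)_n\|^2=\bigl\|(p_B)_nq_n(p_B)_n\bigr\|=\bigl\|\xi_n\,(\xi_n^*q_n\xi_n)\,\xi_n^*\bigr\|=\xi_n^*q_n\xi_n .
\]
Because $q_n$ is diagonal, $\xi_n^*q_n\xi_n=\tfrac1n\sum_i(q_n)_{ii}=\tau_n(q_n)=\|q_n\|_2^2$. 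Hence $\|(p_Bq)_n\|=\|q_n\|_2$ for every $n$. Finally, since $q\in I_2(\{\omega\})$ means precisely that $\lim_\omega\|q_n\|_2=0$, this identity gives $\lim_\omega\|(p_Bq)_n\|=0$, which is the claim.

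There is no real obstacle here; the only point worth isolating is the collapse of the compression $(p_B)_nq_n(p_B)_n$ to the single scalar $\xi_n^*q_n\xi_n$, forced by the rank-one nature of $(p_B)_n$, together with the fact that for a \emph{diagonal} $q_n$ this scalar is exactly $\tau_n(q_n)=\|q_n\|_2^2$. Diagonality of $q$ is essential: for a general projection $q\in I_2(\{\omega\})$ — for instance one with $q_n=(p_B)_n$, for which $\|q_n\|_2^2=1/n\to 0$ — the scalar $\xi_n^*q_n\xi_n$ can stay bounded away from $0$, so the conclusion would fail without the hypothesis $q\in D$.
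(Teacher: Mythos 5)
Your proof is correct and is essentially the paper's argument: both rest on the observation that compressing $p_B$ against the diagonal projection $q$ yields $\tau_n(q_n)=\|q_n\|_2^2$ times a rank-one projection, so $\|(p_Bq)_n\|^2=\tau_n(q_n)\to 0$ along $\omega$. The only cosmetic difference is that you compute $\|(p_Bq)_n\|^2$ via $(p_B)_nq_n(p_B)_n$ (using the rank-one form $\xi_n\xi_n^*$ of $(p_B)_n$) and get the exact identity $\|(p_Bq)_n\|=\|q_n\|_2$, whereas the paper computes $q_n(p_B)_nq_n$ as a block of entries $1/n$ and gives the $\epsilon$-estimate; your closing remark that diagonality is essential (e.g.\ $q=p_B$) is a nice, correct addition.
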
  
\begin{proof} Since $\ds\lim_\omega \|q_n\|_2 = 0$, if $\epsilon > 0$ we may select a set $\gs \in \omega$ such that  $\|q_n\|_2 < \epsilon$ for each  $n$  in $\gs$.
Since $q \in D$,   $(qp_bq)_n$ can be viewed as a block matrix in $M_n$ for each $n \in \sigma$.  The size of the block is rank($q_n)\times$ rank($q_n)$ and the entries in the block are all $1/n$.  I.e. this block is a scalar multiple of a rank 1 projection, and the multiple is rank($q_n)/n\le\epsilon^2$.  Thus \[
 \lim_{\omega}  \|(p_Bq)_n\|= 0.
\]
\end{proof}

\begin{corollary} If $p$ is a projection in $D\cap I_2( \gb\mathbb N \setminus \mathbb N)$, then $pp_B$ is compact.
 \end{corollary}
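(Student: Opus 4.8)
The plan is to obtain the corollary directly from Proposition 3.2, invoked at every free ultrafilter on $\mathbb N$ at once, together with the elementary fact that a bounded sequence of nonnegative reals converges to $0$ exactly when it converges to $0$ along every free ultrafilter.

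First I would note that enlarging the closed set $\Omega$ only shrinks the ideal $I_2(\Omega)$, so $D\cap I_2(\bnn)\subseteq D\cap I_2(\{\omega\})$ for every free ultrafilter $\omega$; indeed $p\in D\cap I_2(\bnn)$ means precisely that $\lim_\omega\|p_n\|_2=0$ for each $\omega\in\bnn$, and in particular $p$ is a projection in $I_2(\{\omega\})\cap D$ for every such $\omega$. Applying Proposition 3.2 with $q=p$ therefore gives $\lim_\omega\|(p_Bp)_n\|=0$ for every free ultrafilter $\omega$. The sequence $\|(p_Bp)_n\|$ is bounded by $1$, since $p_B$ and $p$ are projections, and it vanishes along every free ultrafilter on $\mathbb N$, so $\lim_{n\to\infty}\|(p_Bp)_n\|=0$; by definition this says $p_Bp\in I_\infty(\bnn)=K$. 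Finally $p$ and $p_B$ are self-adjoint, so $(pp_B)^*=p_Bp$, and since $K$ is a norm-closed self-adjoint ideal we conclude $pp_B\in K$, that is, $pp_B$ is compact.

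I do not expect any genuine obstacle: all the content sits in Proposition 3.2, and the only step requiring comment is the passage from ``vanishing along every free ultrafilter on $\mathbb N$'' to ``vanishing as $n\to\infty$'', which is standard --- a bounded sequence with a nonzero limit point admits a free ultrafilter along which it fails to tend to $0$. One could in fact avoid quoting Proposition 3.2 verbatim, since its proof delivers the clean identity $\|(p_Bp)_n\|=\|(p_B)_np_n\|=\|p_n(p_B)_np_n\|^{1/2}=(\rnk(p_n)/n)^{1/2}=\|p_n\|_2$; then $p\in I_2(\bnn)$ forces $\|p_n\|_2\to 0$, so $p_Bp\in I_\infty(\bnn)=K$, and $pp_B=(p_Bp)^*\in K$ as before.
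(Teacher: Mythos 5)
Your proof is correct and is essentially the paper's argument: the paper's proof is exactly ``apply Proposition 3.2 to each $\omega\in\gb\mathbb N\setminus\mathbb N$,'' and you have simply made explicit the (correct) passage from vanishing along every free ultrafilter to $\lim_n\|(p_Bp)_n\|=0$, i.e.\ membership in $K=I_\infty(\gb\mathbb N\setminus\mathbb N)$, plus taking adjoints. Your closing direct computation $\|(p_Bp)_n\|=\|p_n\|_2$ is also a valid (and slightly cleaner) shortcut, but it is the same idea as the proof of Proposition 3.2 rather than a genuinely different route.
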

 
\begin{proof}
 Apply  Proposition 3.2 to each $\omega \in ( \gb\mathbb N \setminus \mathbb N).  $
\end{proof}

\begin{corollary} For any proper ideal $J $ that contains $ p_B$, $y_J\ne x_J$.
 \end{corollary}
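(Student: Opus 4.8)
The plan is to show that the supremum $y_J$ of the projections in $D\cap J$ is strictly smaller than the support projection $x_J$ of $J$, by exhibiting an element of $J$ that is ``orthogonal'' to every diagonal projection in $J$ in a suitable sense. Since $J$ contains $p_B$ and $p_B$ is itself a (non-diagonal) projection, the natural candidate to detect the gap is $p_B$ together with the diagonal subalgebra.

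First I would recall from Proposition 2.4 that $y_J$ is the support projection of $J\cap D$, and that a projection $e\in M^{**}$ satisfies $e\le y_J$ if and only if $e$ is dominated by a supremum of projections from $D\cap J$. So to prove $y_J\ne x_J$ it suffices to produce a projection in $J$ (or more generally a positive element of $J$) whose range projection in $M^{**}$ is not under $y_J$; equivalently, it suffices to find a positive element $a\in J$ such that $a$ is not in the hereditary subalgebra generated by $D\cap J$, i.e. such that $pap$ does not converge to $a$ as $p$ runs over the projections of $D\cap J$. I would take $a=p_B$. The key point is that for any projection $q\in D\cap J$ we have, by Corollary 3.3 (or directly by Proposition 3.2 applied to every $\omega\in\beta\mathbb N\setminus\mathbb N$), that $qp_B$ is compact; since $J$ is proper, $1\notin J$, so $q\ne 1$ eventually in a strong enough sense, but more to the point $qp_Bq$ is compact for every such $q$. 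Hence $p_Bq p_B$ differs from $p_B$ by a non-compact amount: $\|p_B - qp_Bq\|$ stays bounded away from $0$ along the relevant ultrafilters, because $p_B$ itself is not compact ($\|(p_B)_n\|=1$ for all $n$) while $qp_Bq$ vanishes at infinity.

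More precisely, I would argue as follows. Suppose for contradiction that $y_J=x_J$. Then $p_B\le x_J=y_J$, so $p_B$ lies in the norm closure of $\{qp_Bq: q\in D\cap J\}$ — indeed since $y_J$ is the limit of an increasing net of projections $q_\alpha\in D\cap J$ and $p_B\le y_J$, we would get $q_\alpha p_B q_\alpha\to p_B$ in norm. But each $q_\alpha p_B q_\alpha$ is compact by Corollary 3.3, and $K$ is norm closed, so $p_B$ would be compact, contradicting $\|(p_B)_n\|=1$ for every $n$. Therefore $p_B\not\le y_J$, and since $p_B\in J$ gives $p_B\le x_J$, we conclude $y_J\ne x_J$.

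The main obstacle, and the step that needs care, is justifying the passage from ``$p_B\le y_J$'' to ``$q_\alpha p_B q_\alpha\to p_B$ in norm'' for an approximate unit $\{q_\alpha\}$ of $J\cap D$ drawn from projections: one is really using that $y_J$ is the strong limit of such a net and that multiplication is jointly strong-$*$ continuous on bounded sets, together with the fact that for a norm element $p_B$ of $M$, strong convergence $q_\alpha p_B q_\alpha\to p_B$ of the relevant compressions actually upgrades to norm convergence here because the obstruction $p_B-q_\alpha p_B q_\alpha$ is being forced into the norm-closed ideal $K$. Rather than invoke strong limits, the cleanest route is the contrapositive packaged through Proposition 2.4(2)–(3): $D\cap J$ is a MASA of $J$ and $y_J$ is dense in $x_J$, so if $p_B\le y_J$ then $p_B$ lies in the hereditary C$^*$-subalgebra of $J$ generated by $D\cap J$, whence $p_B$ is a norm limit of elements $q p_B q$ with $q\in D\cap J$ a projection — each compact by Corollary 3.3 — forcing $p_B\in K$, a contradiction. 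This avoids any delicate bicommutant bookkeeping and reduces everything to Corollary 3.3 plus the norm-closedness of $K$.
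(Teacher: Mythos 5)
Your overall strategy is the paper's: assume $y_J=x_J$, note $p_B\le x_J=y_J$, deduce that the increasing net of projections $q_\alpha$ in $D\cap J$ (whose supremum is $y_J$) is an approximate unit for $p_B$, and then contradict the resulting norm convergence $q_\alpha p_B q_\alpha\to p_B$. The gap is in the contradiction step. You claim that $qp_B$ (hence $qp_Bq$) is compact for \emph{every} projection $q\in D\cap J$, citing Corollary 3.3 ``or Proposition 3.2 applied to every $\omega\in\beta\mathbb N\setminus\mathbb N$.'' But Corollary 3.3 only covers projections in $D\cap I_2(\beta\mathbb N\setminus\mathbb N)$, and for a general proper ideal $J$ containing $p_B$ the projections of $D\cap J$ need not lie there. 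Concretely, take $J=I_2(\{\omega_0\})$ for a single free ultrafilter $\omega_0$: this is a proper ideal containing $p_B$ (since $\|(p_B)_n\|_2=n^{-1/2}$), yet $D\cap J$ contains the projection $q$ with $q_n=1_n$ for $n$ in an infinite set $S\notin\omega_0$ and $q_n=0$ otherwise, and $qp_B$ is not compact because $\|(qp_B)_n\|=1$ for all $n\in S$. For such $J$ your argument yields no contradiction (the compressions along the approximate unit are not compact), even though the corollary is still true for it. So as written the proof only covers ideals with $D\cap J\subset I_2(\beta\mathbb N\setminus\mathbb N)$, not arbitrary proper ideals containing $p_B$.

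The repair is small and brings you back to what the paper's one-line proof is actually using. Let $\Omega\subset\beta\mathbb N$ be the hull of $J\cap Z(M)$. Since $J$ is proper, $\Omega\neq\emptyset$; and $\Omega\cap\mathbb N=\emptyset$, because if $n_0\in\Omega\cap\mathbb N$ then $e_{n_0}p_B$ (with $e_{n_0}$ the central projection onto the $M_{n_0}$ summand) is a nonzero element of $J\cap M_{n_0}$, so by simplicity of $M_{n_0}$ we get $e_{n_0}\in J\cap Z(M)$, contradicting $n_0\in\Omega$. Fix any $\omega\in\Omega$; by Proposition 1.4, $J\subset I_2(\Omega)\subset I_2(\{\omega\})$, so every projection $q\in D\cap J$ lies in $D\cap I_2(\{\omega\})$ and Proposition 3.2 gives $\lim_\omega\|(qp_B)_n\|=0$. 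Since $\|(p_B)_n\|=1$ for all $n$ and $\|(qp_Bq)_n\|\le\|(qp_B)_n\|$, it follows that $\|p_B-qp_Bq\|\ge\lim_\omega\bigl(1-\|(qp_Bq)_n\|\bigr)=1$ for every projection $q\in D\cap J$, which contradicts $q_\alpha p_B q_\alpha\to p_B$ in norm. In other words, a single $\omega$ in the hull suffices, and compactness of the compressions is both unavailable in general and unnecessary; with that substitution your argument coincides with the paper's.
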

\begin{proof}
 If $y_J= x_J$, then the projection approximate unit for $D \cap J$, which converges to $y_J$, would be an approximate unit for $p_B$, and that is false by the last proposition.
\end{proof}

\begin{proposition}  If $f$ is a pure state of $M$, $p$ is an abelian projection in $M$ such that 
$f(p) = 1$, and $B$ is a MASA of $M$ that contains $p$, then   $f|_B $   is pure and $f$ is its unique state extension.
\end{proposition}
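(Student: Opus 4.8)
The plan is to exploit the one feature that distinguishes this situation from the hard case: an abelian $p$ forces the whole corner $pMp$ to be abelian, and a MASA containing $p$ must therefore swallow all of $pMp$. Once that is in place, the state $f$ lives entirely inside $B$ and the result becomes a statement about a pure state restricted to a hereditary $C^*$-subalgebra.

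\medskip

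\noindent\emph{Step 1 (the corner is abelian and lies in $B$).} Since $p=\oplus p_n$ is abelian, each $p_n$ has rank $0$ or $1$, so each $p_nM_np_n$ is $\{0\}$ or $\mathbb C$; hence $pMp$ is abelian. Now take $a\in pMp$ and $b\in B$. Because $B$ is abelian and $p\in B$, $b$ commutes with $p$, so $b=pbp+(1-p)b(1-p)$; consequently $ab=a(pbp)$ and $ba=(pbp)a$, and these agree since $a$ and $pbp$ both lie in the abelian algebra $pMp$. Thus $a\in B'\cap M=B$, i.e. $pMp\subseteq B$; in particular $pMp=pBp$ and $B=pMp\oplus(1-p)B(1-p)$.

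\medskip

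\noindent\emph{Step 2 (uniqueness).} From $f(1-p)=0$ and the Cauchy--Schwarz inequality for $f$ we get $f((1-p)x)=f(x(1-p))=0$ for all $x$, hence $f(x)=f(pxp)$ for every $x\in M$; the same computation applies to any state $g$ of $M$ with $g(p)=1$. If now $g$ is a state of $M$ with $g|_B=f|_B$, then $g(p)=f(p)=1$ because $p\in B$, so for every $x\in M$ we have $g(x)=g(pxp)=f(pxp)=f(x)$, the middle equality because $pxp\in pMp\subseteq B$ by Step 1. Hence $g=f$, and $f$ is the unique state extension of $f|_B$.

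\medskip

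\noindent\emph{Step 3 (purity of $f|_B$).} Using $B=pMp\oplus(1-p)B(1-p)$ and $f((1-p)B(1-p))=0$ (again by Cauchy--Schwarz), $f|_B$ is simply $f|_{pMp}$ padded by $0$ on the summand $(1-p)B(1-p)$, and a state of the abelian algebra $B$ is pure precisely when it is multiplicative; by the block decomposition this reduces to multiplicativity of $f|_{pMp}$, i.e. to $f|_{pMp}$ being a pure state of $pMp$. But $pMp$ is a hereditary $C^*$-subalgebra of $M$ and $\|f|_{pMp}\|=f(p)=1$, so $f|_{pMp}$ is pure, being the restriction of the pure state $f$ to a hereditary subalgebra on which it has norm one (see \cite{gert}; alternatively, in the GNS representation $(\pi_f,H_f,\xi_f)$ one has $\pi_f(p)\xi_f=\xi_f$, so the GNS space of $f|_{pMp}$ is $\pi_f(p)H_f$ and $\pi_f(p)\pi_f(M)\pi_f(p)$ acts on it as the compression of the irreducible $\pi_f$ by the projection $\pi_f(p)\in\pi_f(M)$, which is again irreducible). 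Therefore $f|_B$ is pure, completing the proof.

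\medskip

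\noindent The only non-formal point is the last one --- that restricting $f$ to the corner $pMp$ yields a \emph{pure} state rather than merely a norm-one state; everything else is bookkeeping around the identity $f(\,\cdot\,)=f(p\,\cdot\,p)$. The place where abelianness of $p$ is essential is Step 1: it is what guarantees that $B$ contains all of $pMp$ (and not just the a priori smaller $pBp$), so that the unique extension argument in Step 2 has enough room to work.
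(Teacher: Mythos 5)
Your proof is correct, and while it rests on the same key observation as the paper --- abelianness of $p$ forces the whole corner $pMp$ into the MASA, so that $f(\cdot)=f(p\,\cdot\,p)$ pins down every extension --- it gets there by a genuinely different route in two places. The paper first reduces to $B=D$ (every MASA of $M$ is unitarily equivalent to $D$) and then runs both halves through the conditional expectation $\scp$: uniqueness via the identity $f'(a)=f'(pap)=f(\scp(a))$ for any extension $f'$, and purity of $f|_D$ by contradiction, producing a projection $q\in D$ with $0<f(q)<1$ and using $f=f\circ\scp$ to split $f$ as $qfq+(1-q)f(1-q)$, contradicting purity of $f$ on $M$. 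You avoid both the unitary reduction and the conditional expectation: you prove $pMp\subseteq B'\cap M=B$ directly for an arbitrary MASA $B$ containing $p$, obtain uniqueness by the same Cauchy--Schwarz compression argument, and get purity of $f|_B$ from the decomposition $B=pMp\oplus(1-p)B(1-p)$ together with the standard fact that the restriction of a pure state to a hereditary corner on which it has norm one is again pure (equivalently, the compression of an irreducible representation by a projection in its image is irreducible), plus multiplicativity of pure states on the abelian algebra $pMp$. What your route buys is generality and self-containedness: the argument works verbatim in any unital C*-algebra for any abelian projection $p$, any MASA containing $p$, and any pure state with $f(p)=1$, with no appeal to the special structure of $M$, to the classification of its MASAs, or to $\scp$. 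What the paper's route buys is brevity within its own framework, where $\scp$ and the unitary equivalence of MASAs with $D$ are already in hand, and a more elementary purity step that needs nothing beyond the fact that a non-pure state of an abelian algebra is split by a projection.
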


\begin{proof} Since every  MASA of $M$ is unitarily equivalent to $D$, we may assume  $B = D$.

Let $f'$ be any state extension of $f|_D$ and recall that  $f'=pf'p$ by Schwarz inequality.  Now observe that $pMp \subset D$ because $p$ is abelian.   So, for any $a \in M$,
\[
f'(a)=f'(pap)=f(pap)=f(\scp(pap))=f(p\scp (a)p)=f(\scp(a)).
\]
Thus $f$ is the unique state extension of $f|_D$.

If  $f|_D$ is  not pure,  there would be a projection $q \in D$ such that $0 < f(q) < 1$  and this would mean that for any $a \in M$, 
\begin{align*}
f(a) &= f(\scp(qaq+qa(1-q)+(1-q)aq+(1-q)a(1-q)))\\
&=f(\scp(qaq+(1-q)a(1-q))\\
&=f(qaq+(1-q)a(1-q)) \\
&= ((qfq)+(1-q)f(1-q))(a),
\end{align*}
 so $f=qfq+(1-q)f(1-q)$, contradicting the assumption that $f$ is pure on $M$.  So $f$ is a pure state on $D$.
\end{proof}

\bigskip

\begin{theorem} The set of all singular pure states of $M$ that take a nonzero value  on $I_{ab}$ is relatively weak* open and dense in the set of all singular pure states of $M$. 
\end{theorem}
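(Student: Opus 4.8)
The plan is to recognize the assertion as an instance of Lemma 2.3 applied to the quotient $A = M/K$, where $K$ is the ideal of compact operators. Since every $M_n$ is the linear span of its rank one projections, each $M_n$ lies in $I_{ab}$, so $K\subseteq I_{ab}$ and $I_{ab}/K$ is a closed two-sided ideal of $M/K$. First I would recall the identification of the singular pure states of $M$ with $PS(M/K)$: a pure state of $M$ is singular exactly when it annihilates $K$. Indeed, a normal pure state of the von Neumann algebra $M$ is a vector state of some summand $M_n$, and the identity of $M_n$ lies in $K$, so such a state does not vanish on $K$; conversely a pure state that kills $K$ cannot be normal, since $K$ is weak* dense in $M$, hence it is singular. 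Therefore the pull-back map $g\mapsto g\circ\pi$, with $\pi\colon M\to M/K$ the quotient map, is a weak* homeomorphism of $PS(M/K)$ onto the space of singular pure states of $M$, and (using $K\subseteq I_{ab}$) it carries $\{g\in PS(M/K): g(I_{ab}/K)\neq 0\}$ onto the set described in the theorem.

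Let $z\in (M/K)^{**}$ be the support projection of the ideal $I_{ab}/K$. As the support projection of a two-sided ideal it is central, and as the supremum of an approximate unit of $I_{ab}/K$ it is open. The one step with real content is to show that $z$ is also dense, i.e. that $I_{ab}/K$ is an essential ideal of $M/K$. For this, suppose $a=(a_n)\in M$ satisfies $ap\in K$ for every abelian projection $p\in M$; I claim $\|a_n\|\to 0$, that is, $a\in K$. Otherwise there are $\delta>0$ and an infinite set $S_0$ of indices with $\|a_n\|>\delta$ for $n\in S_0$; choosing a unit vector $\xi_n\in\mathbb C^n$ with $\|a_n\xi_n\|>\delta$ for each $n\in S_0$ and letting $p$ be the abelian projection whose $n$-th coordinate is $\xi_n\xi_n^*$ for $n\in S_0$ and $0$ otherwise, we get $\|(ap)_n\|=\|a_n\xi_n\|>\delta$ for all $n\in S_0$, so $ap\notin K$, a contradiction. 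Hence the annihilator of $I_{ab}/K$ in $M/K$ is $0$, so $z$ is dense.

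Now Lemma 2.3, applied with $A=M/K$ and the open dense central projection $z$, shows that $PS(M/K)_z$ is weak* dense in $PS(M/K)$; it is also relatively weak* open there, since its complement $\{g: g(u_\alpha)=0\text{ for all }\alpha\}$ is weak* closed for any approximate unit $\{u_\alpha\}$ of $I_{ab}/K$. Because $z$ is open, a pure state $g$ satisfies $g(z)=1$ precisely when $g(I_{ab}/K)\neq 0$, so $PS(M/K)_z=\{g\in PS(M/K): g(I_{ab}/K)\neq 0\}$. Transporting through the weak* homeomorphism of the first paragraph yields exactly the claim that the singular pure states of $M$ taking a nonzero value on $I_{ab}$ form a relatively weak* open, dense subset of all singular pure states of $M$.

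The main obstacle is the essentiality of $I_{ab}/K$ in $M/K$, carried out in the second paragraph; everything else is bookkeeping around Lemma 2.3. One point to watch is that one must pass to $M/K$ before invoking Lemma 2.3: the ideal $I_{ab}$ is already essential in $M$ itself, so Lemma 2.3 applied directly to $M$ would only give density among \emph{all} pure states of $M$, whereas the singular pure states form a proper, weak* closed subset, and density in the larger set does not imply density in it.
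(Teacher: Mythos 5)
Your proposal is correct and takes essentially the same route as the paper: both reduce to Lemma 2.3 applied to $M/K$ with the open, dense, central support projection of $I_{ab}/K$, the only substantive step being the density (essentiality) of that ideal, which the paper gets by lifting a putative orthogonal hereditary subalgebra of $M/K$ and producing an infinite-rank abelian projection beneath an infinite-rank projection, while you verify the same fact by a direct unit-vector computation showing $aI_{ab}\subset K$ forces $a\in K$; the two arguments are interchangeable. Your explicit identification of the singular pure states of $M$ with $PS(M/K)$ is a point the paper leaves implicit, and while your sketch only argues one implication (a pure state killing $K$ is singular), the converse (a singular pure state annihilates $K$) is standard, e.g.\ because a pure state is multiplicative on the center, so a singular one vanishes on every summand $M_n$ and hence on $K$.
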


\begin{proof}The central cover $x$ of the ideal $I_{ab}/K$  in $(M/K)^{**}$ is an open projection.  If $x$ were not dense for $M/K$, there would be a  non-zero hereditary subalgebra $J_0$ of $M/K$ that is orthogonal to  $I_{ab}/K$. Lift $J_0$ to a closed hereditary subalgebra $J$  of $M$. Since $J$ is generated by its projections,  it contains an infinite
rank projection $p$. Thus by the form of $M$ there is an infinite rank abelian projection q in $I_{ab}$ such
that pq=q. However this contradicts the assumption that $J$  is orthogonal to $I_{ab} /K$. Thus $x$ is open and dense.
Apply Lemma 2.3 to $M/K$ to get that $PS(M/K)x$ is weak* dense in $PS(M/K)$.  Since $x$ is central and open, $PS(M/K)_x = \{f \in PS(M/K) : f(x) > 0\}$, $PS(M/K)_x$ must be weak* open in $PS(M/K)$.
\end{proof}

\begin{theorem} The set of all singular pure states of $D$ that take a nonzero value  on $I_{ab}\cap D$ is relatively weak*
open and dense in the set of all singular pure states of $D$. All of these pure states of $D$ have unique state extension to all of $M$.
\end{theorem}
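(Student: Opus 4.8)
The plan is to establish the two assertions in turn. For the first, I would run the argument of the preceding theorem with $D$ in place of $M$. Let $x$ denote the support projection of the ideal $\bigl(\iab\cap D+(K\cap D)\bigr)/(K\cap D)$ in $\bigl(D/(K\cap D)\bigr)^{**}$; since $D$ is abelian this is automatically an open central projection, so it suffices to show that $x$ is dense, i.e.\ that the ideal it covers is essential in $D/(K\cap D)$. If it were not, there would be a nonzero closed ideal $J_0$ of $D/(K\cap D)$ orthogonal to it; lift $J_0$ to a closed ideal $J$ of $D$ properly containing $K\cap D$. Since the spectrum of $D$ is totally disconnected, $J$ is generated by its projections, so $J$ contains a projection $p=\oplus p_n\in D$ with $p\notin K$; then $p_n\ne0$ for infinitely many $n$, and selecting one diagonal coordinate inside each such $p_n$ produces an abelian projection $q=\oplus q_n\le p$ with $q\in\iab\cap D$ and $q\notin K$. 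The image of $q$ in $D/(K\cap D)$ is then a nonzero projection lying in the two orthogonal ideals $J_0$ and (the image of) $\iab\cap D$, which is absurd. Hence $x$ is open and dense, and Lemma 2.3 applied to $D/(K\cap D)$ shows that the pure states of $D/(K\cap D)$ taking the value $1$ on $x$ form a relatively weak* open, dense subset of $PS(D/(K\cap D))$. Identifying the latter with the singular pure states of $D$, and observing that such a state takes a nonzero value on $x$ precisely when it takes a nonzero value on $\iab\cap D$, yields the first assertion.

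For the second assertion, fix a singular pure state $f$ of $D$ with $f(\iab\cap D)\ne0$; the heart of the matter is to produce an abelian projection $q\in\iab\cap D$ with $f(q)=1$. View $D$ as $C(\beta Y)$, where $Y$ is the discrete set of all pairs $(n,i)$ with $1\le i\le n$, so that $f$ is evaluation at a free ultrafilter $\omega$ on $Y$. By spectral theory there is a projection $p=\oplus p_n\in\iab\cap D$ with $f(p)=1$. Because $p$ lies in $\iab$, it is approximated to within $1/2$ by some element $c=\sum_k a_kr_kb_k$ of the algebraic ideal generated by the abelian projections $r_k$, and $\sup_n\rnk(c_n)<\infty$; comparing ranks summand by summand (a rank-$m$ projection has distance $1$ from matrices of smaller rank) then forces $t:=\sup_n\rnk(p_n)<\infty$. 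Consequently the set $A\subseteq Y$ on which $p$ equals $1$ meets each summand in at most $t$ points, so enumerating those points within each summand partitions $A$ into sets $A_1,\dots,A_t$, each meeting every summand in at most one point. Since $f(p)=1$ gives $A\in\omega$ and $\omega$ is an ultrafilter, some $A_j\in\omega$, and the diagonal projection $q$ supported on $A_j$ is abelian, lies in $\iab\cap D$, and satisfies $f(q)=1$.

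With such a $q$ in hand, extend $f$ to a pure state $\tilde f$ of $M$ (the extreme points of the weak* compact convex set of state extensions of $f$ are pure states of $M$). Since $q$ is an abelian projection of $M$ lying in the MASA $D$ and $\tilde f(q)=1$, Proposition 3.5 identifies $\tilde f$ as the unique state extension of $\tilde f|_D=f$; hence $f$ has a unique state extension to $M$, so all the pure states in the open dense set found in the first part do. The main obstacle is the middle paragraph: one needs the fact that a projection $\oplus p_n$ can lie in $\iab$ only when $\sup_n\rnk(p_n)<\infty$, together with a pigeonhole decomposition of the support of $p$ that is compatible with the ultrafilter $\omega$; once the abelian projection $q$ is found, the rest follows directly from the preceding theorem's method, Lemma 2.3, and Proposition 3.5.
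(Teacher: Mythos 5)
Your proof is correct and follows essentially the same route as the paper: the paper proves the openness/density claim exactly as in the preceding theorem (with $D/(K\cap D)$ in place of $M/K$ and Lemma 2.3), and derives uniqueness of extensions from Proposition 3.5. Your middle paragraph (bounded rank of projections in $\iab\cap D$ plus the ultrafilter pigeonhole producing an abelian projection $q$ with $f(q)=1$) is precisely the detail the paper leaves implicit in the phrase ``follows from Theorem 3.5,'' and it is carried out correctly.
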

\begin{proof}
The first sentence is proved in the same way as the last theorem.  The second sentence follows from Theorem 3.5.
\end{proof}

%\begin{lemma}   If  $0\le t_1\le ...\le t_k , \sum t_j = T$, then there exists a partition $U \cup V$ of $\{1, ..., k\}$ such that $0 \le \sum_{t \in U} t_j - \sum_{t \in V} t_j \le t_k$.  Hence 
%$|\sum_{j \in U} t_j -T/2| \le t_k/2$, and
%$|\sum_{j \in V} t_j -T/2| \le t_k/2$.
%\end{lemma}
%\begin{proof}
%WLOG (by inserting a 0 term if needed) we can assume that $k$ is even.  Then set $U = \{t_{2j}\}, V = \{t_{2j-1}\}$.   
%Since $0\le t_1\le ...\le t_k, \sum_{j \in U}t_j - t_k \le \sum_{j\in V}t_j$, i.e. 
% $0 \le \sum_{j \in U} t_j - \sum_{j\in V} t_j \le t_k$ and $\sum_{j \in u}t_j \ge T/2 \ge \sum_{j \in V}t_j$.

%\end{proof}

\begin{theorem} Every element of $I_{ab}$ can be paved.
\end{theorem}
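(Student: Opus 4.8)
The plan is to reduce the statement to a purely finite‑dimensional fact about matrices of small rank, and then to exploit the feature of $\iab$ that distinguishes it from a general operator ideal: every element of $\iab$ is, up to small norm error, of rank \emph{bounded uniformly in the matrix summand}. I would make three reductions first. (i) Since $\iab$ is a $*$‑ideal it suffices to pave self‑adjoint elements: pave the real and imaginary parts separately and pass to a common refinement of the two diagonal partitions, using that refining a diagonal partition can only decrease the pinching norm $\|\sum_j p_j(\,\cdot\,-\spp(\,\cdot\,))p_j\|$, because the conditional expectation onto the finer block‑diagonal subalgebra factors through the coarser one. (ii) Paving is stable under norm perturbation: if $\|b-c\|$ is small and a partition $\{p_j\}$ pinches $c-\spp(c)$ to small norm, then the same partition pinches $b-\spp(b)$ to norm at most $2\|b-c\|$ larger, since $\spp$ and every pinching are contractive. (iii) A self‑adjoint $b\in\iab$ with $\|b\|\le 1$ is a norm limit of self‑adjoint elements of the algebraic ideal generated by the abelian projections, and such an element $\sum_{i=1}^{t}a_iq_id_i$ has $n$‑th component of rank at most $t$ for every $n$, since each abelian projection has all components of rank $\le 1$. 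So it is enough to prove: for each $t$ and $\epsilon>0$ there is an integer $m$, \emph{independent of $n$}, so that every self‑adjoint $c=\oplus c_n\in M$ with $\|c\|\le 1$ and $\rnk(c_n)\le t$ admits in each $M_n$ a partition of $\{1,\dots,n\}$ into at most $m$ diagonal blocks whose pinching sends $c_n$ to within $\epsilon$ of its own diagonal.

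For the finite‑dimensional core, fix such a $c$ and write $c_n=WDW^*$ with $W$ an $n\times r$ isometry ($r=\rnk(c_n)\le t$) whose columns are eigenvectors of $c_n$ and $D=\operatorname{diag}(\mu_1,\dots,\mu_r)$; let $r_k\in\mathbb C^{r}$ be (the conjugate transpose of) the $k$‑th row of $W$, so that $\sum_{k=1}^{n}r_kr_k^*=W^*W=I_r$ and hence $\sum_{k=1}^{n}\|r_k\|^2=r\le t$. Choose a parameter $\eta$ to be a sufficiently small multiple of $\epsilon$ and build the partition in two stages. First, each index $k$ with $\|r_k\|^2>\eta$ is placed in its own singleton block; there are fewer than $t/\eta$ of these, and a singleton block contributes nothing to the pinching error. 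Second, partition the remaining indices greedily into blocks $S_j$ on which $\|\sum_{k\in S_j}r_kr_k^*\|\le 2\eta$ (close a block once its norm first exceeds $\eta$; the next row added can raise the norm by at most $\|r_k\|^2\le\eta$). Since every such block but the last then has trace exceeding $\eta$ while the total trace of all of them is $\le r\le t$, there are fewer than $t/\eta+1$ of them. On a second‑stage block $S_j$ the pinched matrix restricted to its support is $W_jDW_j^*$, with $\|W_jDW_j^*\|\le\|\sum_{k\in S_j}r_kr_k^*\|\cdot\|D\|\le 2\eta$ and with diagonal of norm at most $\max_{k\in S_j}\|r_k\|^2\cdot\|D\|\le\eta$, so the off‑diagonal part of the pinching on that block has norm at most $3\eta$. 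Taking $\eta$ small enough relative to $\epsilon$ (to absorb also the losses from reductions (i)–(ii)) and $m=\lceil 2t/\eta\rceil+2$, padding with zero projections in those summands needing fewer blocks, finishes the estimate with $m$ depending only on $t$ and $\epsilon$.

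The reductions and perturbation bookkeeping are routine; the one genuinely substantive step is the last one, and it is exactly here that bounded rank is essential. A bounded‑rank self‑adjoint matrix can be paved into \emph{boundedly many} diagonal blocks precisely because the frame $\{r_k\}$ of its rows resolves the $r\times r$ identity, so its total mass — the trace $r\le t$ — does not grow with $n$; this is what makes the crude greedy splitting terminate after only $O(t/\epsilon)$ blocks, with no logarithmic loss and without invoking the Marcus–Spielman–Srivastava theorem (unavailable when this paper was written, and in any case far stronger than needed). I expect the only points requiring real care to be: checking that both the number of large‑norm rows and the number of greedy blocks are bounded in terms of $t$ and $\epsilon$ alone; and confirming that a single such $m$ serves every $n$ simultaneously, together with the bookkeeping that turns an $\epsilon$‑pavement of a bounded‑rank approximant into an $\epsilon$‑pavement of the original element of $\iab$.
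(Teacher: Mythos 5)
Your argument is correct, and it reaches the theorem by a genuinely different route than the paper. The paper reduces to paving a single abelian projection $q$ (citing Proposition 1.2 of Tanbay's thesis for the reduction from all of $\iab$ to its generating projections), splits off finitely many diagonal rank--one projections covering the rows where the diagonal of $q$ exceeds $\epsilon/2$, and then invokes the Lyapunov-type Theorem 5.7 of the Akemann--Anderson memoir to split the remaining diagonal projection $r$ into $k$ pieces each compressing $q$ to small norm. You instead make the reduction internally: paveable elements are stable under norm perturbation and under refinement of partitions, and every element of $\iab$ is a norm limit of elements of the algebraic ideal generated by the abelian projections, whose $n$-th components have uniformly bounded rank; the heart of your proof is then the elementary finite-dimensional lemma that a norm-one self-adjoint matrix of rank at most $t$ admits a diagonal paving into $O(t/\epsilon)$ blocks, proved by isolating the rows of its eigenvector isometry with mass exceeding $\eta$ into singletons and greedily grouping the light rows so that each block's Gram sum $\sum_{k\in S_j} r_k r_k^*$ has norm at most $2\eta$, with the block count controlled by the trace bound $\sum_k \|r_k\|^2 = \rnk(c_n) \le t$. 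The two proofs share the step of removing heavy diagonal rows first, but where the paper appeals to the Lyapunov equidistribution theorem for the light part, your greedy pigeonhole (which in the rank-one case is just pigeonholing the scalars $|v_k|^2$) does the job with no external machinery, and it yields the quantitatively stronger conclusion that rank-$t$ elements are paved with $m=O(t/\epsilon)$ blocks uniformly in $n$ --- the uniformity in $n$ being, as you correctly flag, the one point that must be checked for the approximation step to go through. The supporting reductions you use (contractivity of pinchings and of $\spp$ for the $2\|b-c\|$ perturbation bound, refinement monotonicity via factoring the finer pinching through the coarser one, self-adjointness of approximants in the $*$-closed algebraic ideal) are all sound.
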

\begin{proof}
Let $\epsilon > 0 $ be given, let $q$ be an abelian projection in $M$ and choose $m > 2/\epsilon$.  By Proposition 1.2 of \cite{t2}, it will suffice to prove that $q$ is paveable. The rank of each $q_n$ is 1 or 0, so there are abelian projections $\{p_\alpha\}_{\alpha = 1}^m$ such that if $p$ is a rank 1 projection in $D$ such that $pp_\alpha = 0$ for all $\alpha = 1, ..., m$, then $||pqp|| < \epsilon/2$.  Set $r = 1-\sum_{\alpha = 1}^m p_\alpha$.  Since each $p_{\alpha}$ is abelian and in $D, p_\alpha(q-\spp(q))p_\alpha = 0$.  

Define the map $\Psi : rD \to qMq$ by $ \Psi(b) = qbq$.  Since $q$ is abelian, the range of $\Psi$ intersects each $M_n$ in either $\{0\}$ or the 1-dimensional space of multiples of $q_n$. By the definition of $r$, for each $n$ and each abelian projection $s \in rD, \|(sqs)_n\|=\|(qsq)_n\|=\|(\Psi(s))_n\|_1 \le 
\epsilon/2$.  Choose $k$ as a power of 2 such that  $1/k < \epsilon$.  Thus by Theorem 5.7 of \cite{Ly} (applied inside each $M_n$), there are projections $\{p_\alpha\}_{\alpha = m+1}^{m+k} \in rD$ such that $\sum_{\alpha =m+1}^{m+k} p_\alpha=r$ and, for $\alpha = 1, ..., k$,
$||\Psi(p_{m+\alpha})-(1/k)qrq|| \le \epsilon.$ Thus$\||\Psi(p_{m+\alpha})\| =\|qp_{m+\alpha}q\| =\|p_{m+\alpha}qp_{m+\alpha}\|< 2\epsilon$.  Thus
$||p_{\alpha}qp_{\alpha}|| \le  2\epsilon$ for all $\alpha = 1, ..., m+k$.  I.e. $q$ is paveable.
\end{proof}

\bigskip

\begin{theorem}  If $J$ is any ideal of $M$ that is not contained in $I_{ab}$, then there is a projection $p \in J\cap D$ and a $*$-isomorphism of $pMp$ onto $M$.
\end{theorem}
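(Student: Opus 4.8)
The plan is to produce a single diagonal projection $q\in J$ whose nonzero blocks have ranks $1,2,3,\dots$, each exactly once; then $qMq=\bigoplus_k q_{n_k}M_{n_k}q_{n_k}\cong\bigoplus_{k\ge 1}M_k$, which is literally $M$, and $q$ is the projection the statement asks for. The first step is to extract from the hypothesis $J\not\subseteq I_{ab}$ a projection $p=\oplus p_n$ in $J$ with $\sup_n\operatorname{rank}(p_n)=\infty$. This is forced: if every projection of $J$ had ranks bounded by a fixed $C$, then each such projection would be a sum of at most $C$ abelian projections of $M$ (diagonalize each block and collect like-indexed rank-one pieces across all $n$), hence would lie in $I_{ab}$, and since $J$ is generated by its projections (the theorem of Wright recalled in Section 1) we would get $J\subseteq I_{ab}$. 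Conjugating $p$ by the block-diagonal unitary of $M$ that diagonalizes each $p_n$ keeps the conjugate in $J$ and does not change the block ranks, so I may assume $p\in D$.

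Next I would use the elementary observation that $\sup_n\operatorname{rank}(p_n)=\infty$ implies $\{n:\operatorname{rank}(p_n)\ge k\}$ is infinite for every $k$: otherwise only finitely many blocks have rank $\ge k$, and on those finitely many blocks the rank is at most their largest index, so the supremum would be finite. Hence choose recursively $n_1<n_2<\cdots$ with $\operatorname{rank}(p_{n_k})\ge k$; since $p_{n_k}$ is a diagonal projection with at least $k$ ones on its diagonal, pick a diagonal sub-projection $q_{n_k}\le p_{n_k}$ of rank exactly $k$, and set $q=\oplus_n q_n$ with $q_n=0$ for $n\notin\{n_k:k\ge 1\}$. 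Then $q\le p$, so $q=qpq\in J$, and $q\in D$; and $qMq=\bigoplus_k q_{n_k}M_{n_k}q_{n_k}\cong\bigoplus_{k\ge 1}M_k=M$, which yields the desired projection and $*$-isomorphism.

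The only delicate point — and the reason the statement has content — is the membership $q\in J$. One cannot instead build $q$ by choosing, for each $k$, a rank-$k$ diagonal projection inside some block of $J$ and adding them up, because an infinite orthogonal sum of elements of $J$ need not lie in $J$. The device that avoids this is to keep $q$ below a single element ($p$) already known to belong to $J$, so that $q\in J$ is immediate from the heredity of ideals. (Equivalently, one may take a partial isometry $v=\oplus_k v_k\in M$ with $v_k^*v_k\le p_{n_k}$ and $v_kv_k^*$ a diagonal rank-$k$ projection, and set $q=vv^*=v(v^*v)v^*$.) Everything else is bookkeeping: arranging the block sizes so that $\bigoplus_k M_k$ is exactly $M$ uses only that the ranks of $p$ are unbounded, not the finer $I_2/I_\infty$ description of the ideals of $M$ developed in Section 1.
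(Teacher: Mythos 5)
Your proof is correct and takes essentially the same route as the paper: extract a diagonal projection in $J$ with unbounded block ranks, pass to a subsequence carrying rank-$k$ subprojections, and assemble these into a single projection dominated by the original one (hence in $J$ by heredity of ideals), so that the corner is $\bigoplus_k M_k \cong M$. The only difference is that you supply the justification for the first step (via Wright's theorem that $J$ is generated by its projections, together with the observation that a projection with uniformly bounded block ranks is a finite sum of abelian projections and so lies in $I_{ab}$), which the paper simply asserts.
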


\begin{proof} Since $J$ is not contained in $I_{ab}$, there is a projection $q \in J \cap D$ such that $\text{rank}(q_n)\to \infty$. 
Thus by a routine recursive construction, there is a subsequence $\{n_k\}$ and projections $p_{n_k} \in D_{n_k}$ such that $\text{rank} (p_{n_k}) = k$ and $p_{n_k} \le q_{n_k}$ for each k = 1, 2, 3, ...    Set $p=\sum_k p_{n_k}$.  Since $p \le q$, then $p \in J\cap D$.  Since $p_{n_k}Mp_{n_k}$ is *isomorphic to $M_k$, there is a *isomorphism of $pMp$ onto $M$. 
\end{proof}

\begin{theorem}
If $J$ is any ideal of $M$ that is not contained in $I_{ab}$, and if our   Kadison Singer Conjecture is false, then there is a pure state of $D\cap J$ that does not have unique state extension to $J$.
\end{theorem}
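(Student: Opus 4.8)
The plan is to reduce the statement to the (assumed) failure of the Kadison--Singer Conjecture for $M$ itself, by exploiting the corner isomorphism produced in Theorem 3.9. Since $J$ is not contained in $\iab$, that theorem supplies a projection $p\in J\cap D$ and a $*$-isomorphism $\varphi\colon pMp\to M$. Examining the construction there --- $p=\sum_k p_{n_k}$ with $p_{n_k}$ a rank-$k$ diagonal projection in $M_{n_k}$, and $p_{n_k}M_{n_k}p_{n_k}$ identified with $M_k$ by compression to the support coordinates --- one sees that $\varphi$ may be taken to carry $pDp$ onto $D$ (alternatively, since all MASAs of $M$ are unitarily equivalent to $D$, as recalled in the proof of Proposition 3.5, one arranges this after composing with a unitary of $M$). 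We also record the elementary facts that $pMp=pJp$ (as $J$ is an ideal containing $p$) and $pDp=p(D\cap J)p$, so that $pDp$ is a corner of the MASA $D\cap J$ of $J$ (Proposition 1.5(2)); in fact $pDp=\varphi^{-1}(D)$.

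Assuming the Conjecture fails, fix a pure state $g$ of $D$ having two distinct state extensions $g_1\neq g_2$ to $M$, and set $g'(b)=g(\varphi(b))$ for $b\in pDp$. Since $\varphi$ restricts to a $*$-isomorphism of $pDp$ onto $D$, $g'$ is a pure state of $pDp$; and since $g_i|_D=g$, the states $g_1\circ\varphi$ and $g_2\circ\varphi$ are two distinct state extensions of $g'$ to $pMp=pJp$. I then push $g'$ up to $D\cap J$ via $h(a)=g'(pap)$; this is well defined because $pap\in p(D\cap J)p=pDp$, and $h$ is a state of $D\cap J$ with $h(p)=1$. Moreover $h$ is pure: if $h=\tfrac12(\phi_1+\phi_2)$ with $\phi_i$ states of $D\cap J$, then $\phi_i(p)=1$, so $\phi_i=p\phi_i p$ and $\phi_i$ is determined by its restriction to $pDp$; those restrictions average to $g'$, so purity of $g'$ gives $\phi_i|_{pDp}=g'$ and hence $\phi_i=h$.

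It remains to show that state extensions of $h$ to $J$ correspond bijectively to state extensions of $g'$ to $pJp$. Any state $\psi$ of $J$ extending $h$ has $\psi(p)=h(p)=1$ (as $p\in D\cap J$), hence $\psi=p\psi p$, hence $\psi$ is completely determined by its restriction to $pJp$, which is a state extension of $g'$; conversely, a state extension $\rho$ of $g'$ to $pJp$ produces the state $a\mapsto\rho(pap)$ on $J$, which extends $h$. These assignments are mutually inverse, so $h$ inherits the two distinct extensions of $g'$ and therefore does not extend uniquely to $J$; since $h$ is a pure state of $D\cap J$, this is exactly what is wanted.

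I expect the only real subtleties to lie in the first paragraph: confirming that the isomorphism of Theorem 3.9 may be chosen to carry $pDp$ onto $D$ (visible from the construction, but worth writing out), and the mildly delicate bookkeeping with the non-unital algebras $D\cap J$ and $J$, where one must justify that a state with value $1$ on the projection $p$ is ``supported under $p$'' and so is completely determined by its restriction to the corner cut down by $p$. Everything else is a routine manipulation of hereditary corners, needing no analytic input beyond Theorem 3.9.
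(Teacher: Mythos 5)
Your proposal is correct and takes essentially the same route as the paper: both invoke the corner isomorphism of Theorem 3.9 (a $*$-isomorphism $\psi\colon pMp\to M$ with $p\in J\cap D$ carrying $pDp$ onto $D$), pull back the pure state of $D$ with non-unique extension together with its two distinct extensions, and then transfer everything to $D\cap J$ and $J$ by compression with $p$. Your write-up is in fact a bit more careful than the paper's, since you verify explicitly the purity of the compressed state on $D\cap J$ and the bijection between its state extensions to $J$ and the extensions of $g'$ to $pJp$, points the paper leaves implicit.
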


\begin{proof}  By assumption there is a pure state $h$ of $D$ that does not have unique state extension to $M$.  
By the last theorem there is a *isomorphism $\psi$  of $pMp$ onto $M$ for some projection $p \in J\cap D$.  Note that $p \in D$ implies that $\psi|_{pDp}$ is a *isomorphism onto $D$. If $f_1, f_2$ are distinct state extensions of $h$ to $M$, then $\psi^*(f_1), \psi^*(f_2)$ are distinct states of $pMp$.  If $a \in D$, then there exists unique $b \in D$ such that $\psi^{-1}(b) = pap$.

Now define $\theta : M \to pMp$ by $\theta (c) = pcp$.  Remark that $\theta^*$ is a positive isometry from $(pMp)^*$ onto $pM^*p$.  Thus the states $g_i=\theta^* \circ\psi^*(f_i)$ for i=1,2, are also distinct states.  Compute:
$$g_i(a)= \theta^* \circ\psi^*(f_i)(a)= \psi^*(f_i)
(pap)=\psi^*(f_i)(\psi^{-1}(b))=f_i(b)=h(b)=$$
$$h(\psi^{-1}( pap)=\psi^{-1*}(h)(\theta(a))=\theta^* \circ\psi^{-1*}(h)(a)$$
  Thus $g_1, g_2$ are distinct states of $M$ that live on $J$ and have the same pure restriction $\theta^* \circ\psi^{-1*}(h)$ to $D$.
\end{proof}

\begin{theorem}
If $J$ is any ideal of $M$ that is not contained in $I_{ab}$, and if our   Kadison Singer Conjecture is false, then there is an element of $J$ that cannot be paved.
\end{theorem}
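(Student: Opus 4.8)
The plan is to combine the corner isomorphism supplied by the theorem asserting that for any ideal $J\not\subseteq I_{ab}$ there is a projection $p\in J\cap D$ and a $*$-isomorphism $\psi$ of $pMp$ onto $M$ (which, as noted there, carries $pDp$ onto $D$) with the second author's paving criterion \cite{ext}. Since our Conjecture is assumed false, \cite{ext} provides an element $b\in M$ that cannot be paved; the goal is then to manufacture a non-paveable element \emph{inside} $J$. The natural candidate is $a=\psi^{-1}(b)$, which lies in $pMp$, and hence in $J$ because $p\in J$ and $J$ is a two-sided ideal.

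Before transporting, I would record that $\psi$ respects the conditional expectation. Note $\spp|_{pMp}$ is a conditional expectation of $pMp$ onto $pDp$ (for $x\in pMp$ one has $\spp(x)=\spp(pxp)=p\spp(x)p\in pDp$, $\spp(p)=p$, and $\spp$ is a $D$-bimodule map). Consequently $x\mapsto\psi(\spp(\psi^{-1}(x)))$ is unital, positive, idempotent, has range $D$ (using $\psi(pDp)=D$), and is a $D$-bimodule map; by uniqueness of the conditional expectation of $M$ onto $D$ it equals $\spp$. Thus $\psi\circ\spp|_{pMp}=\spp\circ\psi$ and, in particular, $\psi(\spp(a))=\spp(b)$. (Alternatively this is transparent from the block form of $\psi$.)

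The main step is a ``localize and transport'' argument. Suppose toward a contradiction that $a$ is paveable in $M$: given $\epsilon>0$ there are projections $q_1,\dots,q_m\in D$ with $\sum_1^m q_j=1$ and $\big\|\sum_1^m q_j(a-\spp(a))q_j\big\|<\epsilon$. Replace each $q_j$ by $q_jp$; since $D$ is abelian these are again projections in $D$, they lie in $pDp$, and $\sum_1^m q_jp=p$. Because $a-\spp(a)\in pMp$, i.e. $a-\spp(a)=p(a-\spp(a))p$, we get $(q_jp)(a-\spp(a))(q_jp)=q_j(a-\spp(a))q_j$, so $\big\|\sum_1^m (q_jp)(a-\spp(a))(q_jp)\big\|<\epsilon$. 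Applying $\psi$ and the intertwining relation above, $\{\psi(q_jp)\}_1^m$ is a family of projections in $D$ with $\sum_1^m\psi(q_jp)=\psi(p)=1$ and $\big\|\sum_1^m\psi(q_jp)(b-\spp(b))\psi(q_jp)\big\|<\epsilon$. As $\epsilon>0$ was arbitrary, $b$ would be paveable, a contradiction. Hence $a\in J$ cannot be paved.

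I expect the only genuine obstacle to be the verification that $\psi$ is compatible with the diagonal structure, i.e. that $\psi(pDp)=D$ and $\psi\circ\spp|_{pMp}=\spp\circ\psi$; once these are in hand, reducing a hypothetical paving of $a$ to the corner $pMp$ and pushing it through $\psi$ to a paving of $b$ is routine. One could instead try to route the proof through the preceding theorem's pure state of $D\cap J$ with non-unique extension to $J$, but since $J$ is non-unital this would demand a localized version of the extension-versus-paving equivalence, whereas the isomorphism $pMp\cong M$ makes the direct argument cleaner.
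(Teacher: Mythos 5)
Your proposal is correct and follows essentially the same route as the paper: invoke the corner isomorphism $\psi\colon pMp\to M$ (with $\psi(pDp)=D$) from the preceding theorem, take a non-paveable $b\in M$ supplied by \cite{ext}, and conclude that $\psi^{-1}(b)\in J$ cannot be paved. Your added verifications — that $\psi$ intertwines $\spp$ and that a hypothetical paving of $\psi^{-1}(b)$ cuts down by $p$ and transports through $\psi$ to a paving of $b$ — are exactly the details the paper leaves implicit, and they are carried out correctly.
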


\begin{proof} 
As in the proof of the last theorem, there is a *isomorphism $\psi$  of $pMp$ onto $M$ for some projection $p \in J\cap D$.  Note that $p \in D$ implies that $\psi|_{pDp}$ is a *isomorphism onto $D$.  Thus if our conjecture is false, there is an element of $M$ that can't be paved, so the image in $J$ of that element under $\psi^{-1}$ can't be paved.
\end{proof}

We have already seen (Theorem 3.8) that every element of $I_{ab}$ is paveable. 
We now compare $I_{ab}$ with a large C*-subalgebra of $M$ that has the same property.
We will say that 
$b \in M$ is an {\em almost permutation} if each $b_n$
has at most one non-zero entry in each row and column, that entry being $1$. 
The C*-algebra $\cal A$ generated by $D$ and the almost permutations
will be called the {\em permutation algebra}.   
It was shown in \cite{l1} that every element of $\cal A$ is paveable.  

We shall now give a 
characterization of the elements of $\cal A$.
Say that $b \in M$ is $d$-{\it empty} if there is a natural number $d$  
such that each $b_n$ has at most $d$ non-zero entries in each row and column. 
Let $\cal F$ be the *subalgebra of operators that are d-empty for some 
$d \in \mathbb N$. It was shown \cite {aver} that an operator $b$ is in $\cal F$ 
iff there are a $m \in \mathbb N$, $c_1,...,c_m \in D$ and near permutations $b_1,...,b_m$ such 
that $b = \sum_{i=1}^n c_ib_i$; thus $\cal A$ is the norm-closure of $\cal F$.
$\cal A$ also contains each element $b \in M$ whose non-zero entries have modulus larger than 
some positive number $\delta$, since $b_n$ is $({\delta \over\|b_n\|})^2$-empty.

\begin{remark}
${\cal A} + I_{ab}$ is a C*- subalgebra of $M$ by Corollary 1.5.8 of \cite{gert}, and every element in this subalgebra is paveable.  We don't know of a larger C*-subalgebra of $M$ with this property.  It is known (\cite {aver}) that neither ${\cal A} \subset I_{ab}$ nor $I_{ab} \subset {\cal A} $ holds.  
The question left hanging in \cite{aver} is whether $p_B$ in $\cal A$.  The answer, which surprised us, is ``yes".
Our intuition was that if $b$ is any $d$-empty matrix, and $n$ is much larger than $d$, then subtracting $b$ from $p_B$ affects only a ``small" 
fraction of the entries of $(p_B)_n$, and hence the resulting norm can't be all that small.
A graph theoretic technique shows that this intuition is wrong.  We thank Michel de la Salle and 
Gilles Pisier for pointing us to  a  theorem from graph theory (Theorem 1.1 of \cite{JF}),which is due to Friedman and which is exactly what is needed here.
\end{remark}

\begin{theorem}
$p_b \in \cal A$.
\end{theorem}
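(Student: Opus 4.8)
The plan is to show that $p_B$ can be approximated in operator norm, to within any prescribed $\epsilon>0$, by a single \emph{$d$-empty} operator; since every $d$-empty operator lies in $\mathcal F\subseteq\mathcal A$ and $\mathcal A$ is norm-closed, this will give $p_B\in\mathcal A$. The key observation I would start from is that $(p_B)_n=\tfrac1n J_n$, with $J_n$ the all-ones matrix, is nothing but the rank-one orthogonal projection of $\mathbb C^n$ onto the line $\mathbb C\mathbf 1$ spanned by the all-ones vector. So approximating $(p_B)_n$ by a sparse matrix amounts to producing a sparse self-adjoint matrix whose top eigenprojection is that line and whose remaining spectrum is small — in other words, a normalized adjacency matrix of an expander graph.

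Concretely, for a $d$-regular simple graph $G$ on $n$ vertices with $0$--$1$ adjacency matrix $A$, one has $A=A^*$, all row and column sums equal to $d$, and top eigenvalue $d$ with eigenvector $\mathbf 1$; write $\lambda(G)$ for the largest modulus among the remaining eigenvalues. Decomposing $\mathbb C^n=\mathbb C\mathbf 1\oplus\mathbf 1^\perp$, the matrix $\tfrac1d A$ is the identity on $\mathbb C\mathbf 1$ and has norm $\le\lambda(G)/d$ on $\mathbf 1^\perp$, whereas $(p_B)_n$ is the identity on $\mathbb C\mathbf 1$ and $0$ on $\mathbf 1^\perp$; hence
\[
\Bigl\|(p_B)_n-\tfrac1d A\Bigr\|=\frac{\lambda(G)}{d},
\]
and $\tfrac1d A$ has at most $d$ nonzero entries in each row and column, i.e. it is $d$-empty. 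Next I would invoke Friedman's theorem (Theorem 1.1 of \cite{JF}): for each fixed $d$ and each $\eta>0$, a random $d$-regular graph on $n$ vertices satisfies $\lambda(G)\le 2\sqrt{d-1}+\eta$ with probability $\to 1$ as $n\to\infty$, so in particular such a graph exists for all large $n$. Given $\epsilon>0$, choose an \emph{even} $d$ with $(2\sqrt{d-1}+1)/d<\epsilon$ (any $d>9/\epsilon^2$ works, and evenness forces $dn$ even so that $d$-regular graphs on $n$ vertices exist for every $n>d$), let $N\ge d$ be a threshold past which a $d$-regular graph $G_n$ on $n$ vertices with $\lambda(G_n)\le 2\sqrt{d-1}+1$ is available, and set $b=\bigoplus_n b_n$ with $b_n=(p_B)_n$ for $n\le N$ and $b_n=\tfrac1d A(G_n)$ for $n>N$. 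Then $b$ is $\max(d,N)$-empty, hence lies in $\mathcal F\subseteq\mathcal A$, and $\|p_B-b\|=\sup_n\|(p_B)_n-b_n\|\le(2\sqrt{d-1}+1)/d<\epsilon$. Letting $\epsilon\downarrow 0$ and using that $\mathcal A$ is closed finishes the argument.

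The step I expect to be the crux — and the one the preceding remark explicitly flags — is the reduction to expander graphs together with the appeal to Friedman's theorem. The naive intuition is that a $d$-empty matrix agrees with $(p_B)_n$ on only a $d/n$ fraction of its entries and therefore cannot be norm-close to it; the resolution is that the operator norm is insensitive to this, since "expander mixing" makes the sparse matrix $\tfrac1d A(G_n)$ spectrally almost indistinguishable from the rank-one projection $(p_B)_n$. A secondary technical wrinkle is that one needs good $d$-regular graphs for \emph{every} large $n$, which is precisely why Friedman's theorem on random regular graphs is the correct input (the explicit Ramanujan graphs of Lubotzky--Phillips--Sarnak exist only for a sparse set of vertex counts), with the finitely many exceptional small $n$ absorbed harmlessly into the uniform sparsity bound $\max(d,N)$.
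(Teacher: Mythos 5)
Your proposal is correct and follows essentially the same route as the paper: approximate each block $(p_B)_n$ (the rank-one projection onto $\mathbb C\mathbf 1$) by $\tfrac1d$ times the adjacency matrix of a $d$-regular expander, invoke Friedman's theorem to obtain such graphs with second eigenvalue at most $2\sqrt{d-1}+1$ for all sufficiently large $n$, absorb the finitely many small blocks harmlessly, and finish by norm-closedness of $\mathcal A$. The only cosmetic difference is that the paper builds its graphs directly from $d/2$ random permutations --- exactly the model of Friedman's Theorem 1.1, so the approximant is visibly a combination of near-permutations --- whereas you use generic simple $d$-regular graphs and the $d$-empty characterization of $\mathcal F$.
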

\begin{proof}
The range of $(p_B)_n$ is the 1-dimensional subspace spanned by $(1, ..., 1)= v$.  If $b$ is any permutation matrix in $M_n$, then $bv=v$.  This suggests that if we take an even integer d (much smaller than $n$), we might approximate $(p_B)_n$ by  a  $d$-empty element $s \in \cal A$ of the form 
$$s_n= (1/d)\sum_{\alpha=1}^{d/2} (b_\alpha+b_\alpha^*),$$
where each $b_\alpha$ is a permutation matrix in $M_n$.

To estimate $\|s_n-(p_B)_n\|$ we note that $(s_n-(p_B)_n)(v)=0$, so it suffices  to carry out our estimate on the orthogonal complement of $v$.  Since $(p_b)_n$ vanishes there, we are actually concerned with estimating the modulus of the second largest eigenvalue of the self adjoint operator $s_n$.

At this point Friedman's paper \cite{JF} suggests that we should show that $s_n$ is the type of matrix that he is dealing with in his Theorem 1.1.  That is,  we need to show that $s_n$ is the adjacency matrix of a d-regular undirected graph on $n$ vertices (times $1/d$).  If the matrix $b_\alpha$ corresponds to the permutation $\pi_\alpha$ of $1, ..., n$. such a graph $G_n$ is formed by defining the edges
(\cite{JF} p. 2) as follows:
$$E = \{(i,\pi_\alpha(i)), (i, \pi_\alpha^{-1}(i)) : \alpha = 1, ..., d/2; i = 1, ..., n\}.$$
The adjacency matrix of $G$ is exactly $ds_n$.

Now we can apply Theorem 1.1 of \cite{JF} to conclude that there are positive constants $\eta, r$, {\bf independent of $n$}, such that with probability at least $1-\eta/n^r$, the second eigenvalue of a random matrix of the same form as $s_n$ will be less than $(2\sqrt{d-1} + 1)/d$.  Of course that expression can be made as small as we like by taking a large $d$, and this is independent of $n$.

 If $\delta > 0$ is given, we choose even $d$ so that 
$(2\sqrt{d-1} + 1)/d < \delta$.  Let $n_0$ be large enough so that $1-\eta/n_0^r > 0$.  Since $\cal A$ clearly contains all of the finite rank operators in $M$, we may define an operator $c\in \cal A$ by letting $c_n = (p_B)_n, n < n_0$.  For $n \ge n_0$ we choose $c_n$ to one of those elements in $M_n$ of the form $s_n$ (for the value of $d$ chosen above) whose second eigenvalue is less than $\delta$.  These choices ensure that $c \in \cal A$ and that $||p_B - c|| < \delta$. 

\end{proof}

This proof is a classic use of the probabilistic method in combinatorics.  One shows the existence of something by showing that a random choice has a positive probability of being a right choice.  

We call a projection $q \in M$ a {\bf Hadamard projection} if rank$(q_n)$ is 0 or 1 for each $n$, and if rank($q_n)=1$, then $q_n$ has a eigenvector of the form $(e^{i\theta_1}, ..., e^{i\theta_n})$, for real $\theta_1, ..., \theta_n$.

\begin{corollary}
If $q$ is a Hadamard projection in $M$, then $q \in \cal A$.
\end{corollary}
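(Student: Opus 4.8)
The plan is to exhibit each Hadamard projection as a unitary conjugate --- by a \emph{diagonal} unitary --- of a central corner of $p_B$, and then to invoke the theorem just proved that $p_B \in \cal A$ together with the facts that $\cal A$ contains $D$ and is an algebra.

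First I would unwind the definitions. Let $q = \oplus q_n$ be a Hadamard projection and set $S = \{n : \operatorname{rank}(q_n) = 1\}$. For $n \in S$ the projection $q_n$ is, by hypothesis, the rank-one projection onto the line spanned by the unit vector $w_n = \tfrac{1}{\sqrt n}(e^{i\theta_1^{(n)}}, \dots, e^{i\theta_n^{(n)}})$, so its matrix entries are $(q_n)_{jk} = \tfrac1n e^{i(\theta_j^{(n)} - \theta_k^{(n)})}$. By contrast $(p_B)_n$ has every entry equal to $1/n$; that is, $(p_B)_n$ is the rank-one projection onto $\tfrac1{\sqrt n}(1,\dots,1)$.

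Next I would introduce the diagonal unitary $u = \oplus u_n \in D$ with $u_n = \operatorname{diag}(e^{i\theta_1^{(n)}}, \dots, e^{i\theta_n^{(n)}})$ for $n \in S$ and $u_n = 1_n$ otherwise, and the central projection $e = \oplus e_n \in Z(M)$ with $e_n = 1_n$ for $n \in S$ and $e_n = 0$ for $n \notin S$. A one-line entrywise computation shows $u_n (p_B)_n u_n^* = q_n$ for $n \in S$, while for $n \notin S$ both $(e\,p_B)_n$ and $q_n$ vanish; hence
\[
q = u\,(e\,p_B)\,u^* .
\]
Finally, $\cal A$ contains $D$ (it is the $C^*$-algebra generated by $D$ and the almost permutations), so $u, u^*, e \in \cal A$; by the previous theorem $p_B \in \cal A$; and since $\cal A$ is closed under products we conclude $q = u\,(e\,p_B)\,u^* \in \cal A$.

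I do not anticipate a genuine obstacle. The only point needing care is the bookkeeping for the indices $n$ with $\operatorname{rank}(q_n) = 0$, which is exactly what the central cutdown by $e$ absorbs; everything else is the elementary observation that conjugating the ``all entries $1/n$'' projection by the diagonal unitary of phases $e^{i\theta_j^{(n)}}$ produces precisely the ``all entries $\tfrac1n e^{i(\theta_j^{(n)}-\theta_k^{(n)})}$'' projection.
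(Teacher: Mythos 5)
Your proof is correct and follows essentially the same route as the paper: conjugate $p_B$ by the diagonal unitary of phases and use that $\cal A$ contains $D$ and is closed under products. Your explicit central cutdown by $e$ to handle the blocks with $\operatorname{rank}(q_n)=0$ is a small piece of bookkeeping the paper's one-line proof leaves implicit, but it does not change the argument.
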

\begin{proof}
Using the notation above, let $w$ be a unitary in $D$ such that 

\noindent $w_n(e^{i\theta_1}, ..., e^{i\theta_n})=(1, ..., 1)$.  Then $w^*p_Bw =q \in \cal A$.  
\end{proof}

\bigskip
 \section{Pure States  of M}

\begin{proposition}  If $f_0$ is a pure state of $D$ and if $f$ is defined on $M$ by $f=f_o\circ P$, then $f$ is a 
pure state of $M$.
\end{proposition}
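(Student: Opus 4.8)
The plan is to prove that $f=f_0\circ P$ is pure by showing that its GNS representation $\pi_f$ is irreducible, i.e. $\pi_f(M)'=\mathbb C$. Since $D\cong\ell^\infty(I)$ with $I=\bigsqcup_n\{1,\dots,n\}$, the pure state $f_0$ is a character $\chi$; if $\chi$ is evaluation at a coordinate $\beta\in I$ then $f(x)=\langle xe_\beta,e_\beta\rangle$ is a vector state and hence already pure, so I may assume $\chi=\chi_\omega$ for a free ultrafilter $\omega$ on $I$. The key first step is that the GNS cyclic vector $\xi_f$ is a $\chi_\omega$-eigenvector for $\pi_f(D)$: indeed $\|\pi_f(d)\xi_f-\chi_\omega(d)\xi_f\|^2=f_0(|d-\chi_\omega(d)1|^2)=0$, because $\chi_\omega$ is multiplicative. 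Consequently, for any $T\in\pi_f(M)'$ the vector $T\xi_f$ is again a $\chi_\omega$-eigenvector, so the whole statement reduces to the claim that the eigenspace
\[
E_\omega=\{v\in H_f:\pi_f(d)v=\chi_\omega(d)v\ \text{for all}\ d\in D\}
\]
is one-dimensional: granting this, $T\xi_f\in\mathbb C\xi_f$, and cyclicity of $\xi_f$ forces $T$ to be scalar.

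Next I would pass to a concrete model of the GNS construction. Embedding $M\subseteq B(H)$ with $H=\bigoplus_n\mathbb C^n$ and orthonormal basis $(e_\alpha)_{\alpha\in I}$, one has $f(x)=\lim_\omega\langle xe_\alpha,e_\alpha\rangle$, so $H_f$ may be identified with the closed subspace $\overline{M\Xi}$ of the ultrapower $H_\omega=\ell^\infty(I,H)/\{(v_\alpha):\lim_\omega\|v_\alpha\|^2=0\}$, where $M$ acts by coordinatewise multiplication and $\Xi=[(e_\alpha)_\alpha]$ corresponds to $\xi_f$. A vector of $H_f$ then has a representative $(\eta_\alpha)$ with $\eta_\alpha$ lying in the block of $H$ containing $\alpha$; applying the eigenvector condition to the diagonal projections $e_A$, $A\subseteq I$, one checks that $[(\eta_\alpha)]\in E_\omega$ exactly when $\lim_\omega\|\chi_A\eta_\alpha\|^2=0$ for every $A\notin\omega$. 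Since moreover $[(\eta_\alpha)]\in\mathbb C\Xi$ iff $\lim_\omega\sum_{\beta\neq\alpha}|\eta_\alpha(\beta)|^2=0$, the claim becomes: every bounded family $(\eta_\alpha)$ with $\lim_\omega\|\chi_A\eta_\alpha\|^2=0$ for all $A\notin\omega$ has its mass asymptotically concentrated on the diagonal coordinate.

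The heart of the proof is a dichotomy on the mass distribution of $\eta_\alpha$. Suppose first that for some fixed $N$ the top $N$ coordinates of $\eta_\alpha$ carry all but a negligible part of $\|\eta_\alpha\|^2$ for $\omega$-almost every $\alpha$. Letting $g(\alpha)$ be the position of the largest coordinate defines a within-block map $g:I\to I$ (i.e. $g(\alpha)$ lies in the block of $\alpha$) with $|\eta_\alpha(g(\alpha))|^2$ bounded below $\omega$-a.e.; feeding this into the hypothesis $\lim_\omega\|\chi_A\eta_\alpha\|^2=0$ ($A\notin\omega$) forces $g_*\omega=\omega$. Here I would invoke the combinatorial lemma that there is no within-block self-map $g$ of $I$ with $g_*\omega=\omega$ and $\{\alpha:g(\alpha)\neq\alpha\}\in\omega$: because each block is finite, every forward $g$-orbit is eventually periodic, and splitting $\omega$ according to whether it concentrates on the eventually-periodic points or on the strictly transient points, one uses respectively a transversal of the $g$-cycles or the (strictly $g$-decreasing) preperiod function to manufacture a set $A$ with $A\in\omega\iff A^c\in\omega$, which is absurd. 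Hence $g=\mathrm{id}$ $\omega$-a.e., i.e. the mass of $\eta_\alpha$ sits on the diagonal coordinate, as wanted.

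The main obstacle is the complementary diffuse case, in which the mass of $\eta_\alpha$ is spread over an unbounded number of coordinates, all small (a persistent heavy coordinate would again yield a within-block map fixing $\omega$, hence a contradiction by the lemma). Here I would argue that the hypothesis cannot hold at all: the finitely additive set function $A\mapsto\lim_\omega\|\chi_A\eta_\alpha\|^2$ would have to equal $\|\eta\|^2\delta_\omega$, hence be $\{0,\|\eta\|^2\}$-valued, whereas a diffuse family admits, in each block, a bipartition of the coordinates splitting the $\ell^2$-mass of most of the $\eta_\alpha$ roughly in half; assembling these blockwise bipartitions into a single $A$, neither $\lim_\omega\|\chi_A\eta_\alpha\|^2$ nor $\lim_\omega\|\chi_{A^c}\eta_\alpha\|^2$ can vanish, contradicting the hypothesis for whichever of $A,A^c$ lies outside $\omega$. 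The delicate point — and the real work — is making this assembly uniform over a single block, which contains as many vectors $\eta_\alpha$ as its size; this is exactly where a concentration-of-measure estimate, in the spirit of the probabilistic method already used in Section 3, is required.
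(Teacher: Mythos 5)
Your reduction is sound as far as it goes: modelling the GNS space of $f=f_0\circ P$ inside the $\omega$-ultraproduct, observing that $\xi_f$ is a joint eigenvector of $\pi_f(D)$ for the character $\chi_\omega$, and noting that triviality of the commutant (hence purity) would follow if the eigenspace $E_\omega$ were one-dimensional, is all correct, and your ``heavy coordinate'' case is genuinely settled by the standard fact that $g_*\omega=\omega$ forces $g=\mathrm{id}$ $\omega$-a.e. But the proof is incomplete exactly at its decisive point, the diffuse case, which you yourself flag as ``the real work,'' and the plan you outline for it cannot work as described. The hypothesis only gives you an atom bound $\epsilon$ (or, for each fixed $N$, a chunk of mass with atoms at most $1/(N+1)$) that is uniform in the block size $n$, while a block of size $n$ contains $n$ vectors $\eta_\alpha$ to be split by a single bipartition: a random bipartition splits one such measure badly with probability only about $e^{-c/\epsilon}$, a constant, so the union bound over the $n$ rows fails as $n\to\infty$. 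This is not just a defect of the random choice: if the $\nu_\alpha$ are uniform measures on $k$-element sets ($k$ fixed by $\epsilon$), then no family of boundedly many subsets of the block can split them all --- take the edges of the complete $k$-uniform hypergraph on roughly $C(k-1)+1$ vertices, which admits no weak $C$-coloring --- so the strategy ``finitely many candidate sets, the ultrafilter selects one'' is obstructed in principle, and since $\omega$ is arbitrary you cannot discard the unsplit rows either.

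There is also a structural overreach worth naming: in passing to ``every bounded family $(\eta_\alpha)$ with $\lim_\omega\|\chi_A\eta_\alpha\|^2=0$ for all $A\notin\omega$'' you have discarded both the constraint that the vector lies in $\overline{\pi_f(M)\xi_f}$ and the fact that the only vectors you need to control are $T\xi_f$ with $T\in\pi_f(M)'$. What you are actually trying to prove ($\dim E_\omega=1$, indeed its ultrapower strengthening) is a uniqueness statement: it would show that no state extension of $\chi_\omega$ other than $f$ is realizable as a vector state in $\pi_f$. That is much closer in spirit to the paper's open Kadison--Singer conjecture for $M$ (and is not covered by Theorems 3.5--3.6 when $\chi_\omega$ vanishes on $I_{ab}\cap D$) than to the proposition at hand, which asserts only purity. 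The paper's own proof is a one-line appeal to Anderson's extreme-point theorem (Corollary 3 of \cite{pos}), whose argument compares $f$ only with positive functionals dominated by a multiple of $f$ and thereby avoids any such splitting combinatorics; if you want a self-contained argument you should exploit that domination structure (equivalently, the fact that your $v$ is $T\xi_f$ for $T$ in the commutant), not attempt the unrestricted eigenspace claim.
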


\begin{proof} 
The proof of \cite[Corollary 3]{pos} applies without change. 
\end{proof}

\begin{theorem} If

\begin{enumerate}
\item   $\script N$ is a type $II_1$ von Neumann algebra factor that has the cardinality of the continuum,

\item $\tau$ denotes the faithful normal tracial state on $\script N$,

\item  $C(\script N)$ is a  collection of MASAs of $\script N$ that has the cardinality of the continuum, and 
\item  the  continuum hypothesis holds,

\end{enumerate}

\noindent then there is a pure state of $\script N$ that is not multiplicative when restricted to any MASA in $C(\script N)$.
\end{theorem}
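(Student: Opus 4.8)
The plan is a transfinite recursion of length $\omega_1$, using the Continuum Hypothesis in the form $2^{\aleph_0}=\aleph_1$ to enumerate the given collection of MASAs as $C(\script N)=\{B_\alpha:\alpha<\omega_1\}$. I will build a $\subseteq$-increasing chain $\{A_\alpha\}_{\alpha<\omega_1}$ of \emph{separable} unital $C^*$-subalgebras of $\script N$, with $A_\lambda=\overline{\bigcup_{\alpha<\lambda}A_\alpha}$ at limit ordinals, together with pure states $g_\alpha$ of $A_\alpha$ satisfying $g_\beta=g_\alpha|_{A_\beta}$ for $\beta<\alpha$, and arranged so that for every $\alpha$ there is a projection $q_\alpha\in B_\alpha\cap A_{\alpha+1}$ with $0<g_{\alpha+1}(q_\alpha)<1$. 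Granting this, $\bigcup_\alpha g_\alpha$ is a well-defined state on the dense subalgebra $\bigcup_\alpha A_\alpha$ of $A_\infty:=\overline{\bigcup_\alpha A_\alpha}$; extend it by continuity to a state $g$ of $A_\infty$ and then, by the pure state extension theorem, to a state $f$ of $\script N$. Since $q_\alpha\in A_\infty$ and $f|_{A_{\alpha+1}}=g_{\alpha+1}$, we get $f(q_\alpha)=g_{\alpha+1}(q_\alpha)\in(0,1)$, so $f$ is not multiplicative on $B_\alpha$, for every $\alpha$. It thus remains to see that $f$ is pure and to carry out the successor step.

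Purity rests on an elementary fact: if $A=\overline{\bigcup_\gamma A_\gamma}$ is the closure of an increasing chain of unital $C^*$-subalgebras all containing the unit of $A$, and $h$ is a state of $A$ whose restriction to each $A_\gamma$ is pure, then $h$ is pure. (If $h=\tfrac12(h_1+h_2)$ with $h_i$ states, then $0\le h_i\le 2h$, so $\tfrac12 h_i|_{A_\gamma}\le h|_{A_\gamma}$; purity of $h|_{A_\gamma}$ forces $h_i|_{A_\gamma}=c\,h|_{A_\gamma}$, and evaluating at the unit gives $c=1$; as this holds for all $\gamma$, density gives $h_i=h$.) Applied to our tower this shows $g$ is pure on $A_\infty$, hence $f$, which we chose pure over $g$, is pure on $\script N$. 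The same fact makes the limit stages of the recursion automatic: $g_\lambda$ is forced to be the common extension of the $g_\alpha$ ($\alpha<\lambda$), and is already pure. So the entire construction reduces to the successor step.

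At a successor stage we have a pure state $g_\alpha$ of a separable $A_\alpha\subseteq\script N$ and the MASA $B_\alpha$, and we seek $q_\alpha\in B_\alpha$ and a pure extension $g_{\alpha+1}$ of $g_\alpha$ to $A_{\alpha+1}:=C^*(A_\alpha,q_\alpha)$ with $g_{\alpha+1}(q_\alpha)\in(0,1)$. The mechanism: for a projection $q\in\script N$, the values taken at $q$ by the various state extensions of $g_\alpha$ to $\script N$ form the interval $[m(q),\,1-m(1-q)]$, where $m(q)=\sup\{g_\alpha(a):a\in(A_\alpha)_{sa},\ 0\le a\le q\}$; so if $q_\alpha$ can be chosen with $m(q_\alpha)>0$ \emph{and} $m(1-q_\alpha)>0$ — positive $g_\alpha$-mass visible in $A_\alpha$ both under $q_\alpha$ and under $1-q_\alpha$ — then \emph{every} extension of $g_\alpha$, in particular every pure extension to $C^*(A_\alpha,q_\alpha)$, takes a value in $(0,1)$ on $q_\alpha$; we take $g_{\alpha+1}$ to be such a pure extension, which exists by the pure state extension theorem. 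To produce $q_\alpha$: if $g_\alpha$ is already non-multiplicative on the subalgebra $A_\alpha\cap B_\alpha$, any projection $q\in A_\alpha\cap B_\alpha$ with $g_\alpha(q)\in(0,1)$ works; otherwise one uses that, $\script N$ being $II_1$, the MASA $B_\alpha$ is \emph{diffuse}, together with the $II_1$-factor analogue of Lemma 1.2 — averaging a self-adjoint $x\in\script N$ with $E_{B_\alpha}(x)=0$ over the unitary group of $B_\alpha$ yields $\int\|ux-xu\|_2^2\,du=2\|x\|_2^2$, hence $\|ux-xu\|\ge\|x\|$ for some symmetry $u\in B_\alpha$ — to see that $E_{B_\alpha}(a)$ is not a projection for suitable $a$ in an excising sequence $\{a_n\}\subseteq A_\alpha$ for $g_\alpha$; then a spectral projection $q_\alpha=\chi_{[\delta,1-\delta]}\!\bigl(E_{B_\alpha}(a_n)\bigr)\in B_\alpha$ is nonzero and, for appropriate $n$ and $\delta$, straddles the support of $g_\alpha$ in the way required.

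The main obstacle is exactly this last point carried out uniformly over the recursion: ensuring that the successor step never stalls, i.e. that at stage $\alpha$ the countably much data fixed at earlier stages has not already pinned $g_\alpha$ to a character on $B_\alpha$. The genuine danger is that $A_\alpha$ has come to contain a weak$^*$-dense abelian $*$-subalgebra of $B_\alpha$ on which $g_\alpha$ restricts to a character — then every extension of $g_\alpha$ is multiplicative on that subalgebra and no admissible $q_\alpha$ exists. Here the hypotheses are essential: CH makes $C(\script N)$ have size $\aleph_1$, so exactly one MASA is processed per countable stage, leaving $\aleph_1$-many ``independent'' projections available in the diffuse $B_\alpha$ at every stage; the averaging estimate controls how badly a separable subalgebra and a diffuse MASA can be aligned; and with a careful choice of the $q_\beta$'s one bookkeeps so that $g_\alpha$ is never prematurely pinned on $B_\alpha$ and the chosen $q_\alpha$ is transverse to the \emph{support} of $g_\alpha$ (not merely to $A_\alpha\cap B_\alpha$). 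Making this bookkeeping precise, and verifying the transversality, is the technical core of the theorem.
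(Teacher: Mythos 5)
Your overall architecture (an $\omega_1$-chain of separable unital subalgebras $A_\alpha$ with coherent pure states $g_\alpha$, CH to enumerate $C(\script N)$, purity of the limit state along an increasing chain, and a final pure extension to $\script N$) is coherent and genuinely different from the paper's route: the paper does not build states along increasing subalgebras at all, but invokes Lemmas 0.5--0.7 of \cite{face}, which construct a decreasing $\omega_1$-chain of projections such that every state taking the value $1$ on the chain is non-multiplicative on each $B_\alpha$, and then take an extreme point of the resulting closed face of the state space; the only new work in the paper is removing the separable-predual hypothesis of \cite{face} (unitary equivalence of equal-trace projections in a $II_1$ factor, the sequence $p_n\downarrow 0$ with $f(p_n)=1$ obtained from the faithful normal trace and \cite{tak2}, and the cardinality count under CH). The soft parts of your plan (purity of a state whose restrictions to an increasing chain are pure, pure extension of a pure state, $f(q_\alpha)\in(0,1)$ forcing non-multiplicativity on $B_\alpha$) are fine. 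The problem is that the successor step, which you yourself label ``the technical core,'' is asserted rather than proved, and as formulated it can fail.

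Concretely: (i) with $A_{\alpha+1}=C^*(A_\alpha,q_\alpha)$, if $q_\alpha$ commutes with $A_\alpha$ (for instance at a base stage $A_0=\mathbb{C}1$, or whenever one is forced to take $q_\alpha$ commuting with $A_\alpha$), then $q_\alpha$ is central in $A_{\alpha+1}$ and \emph{every} pure state of $A_{\alpha+1}$ takes the value $0$ or $1$ at $q_\alpha$, so no admissible $g_{\alpha+1}$ exists; thus your mechanism must carry the entire load. (ii) That mechanism, $m(q_\alpha)>0$ and $m(1-q_\alpha)>0$, requires nonzero positive elements of the \emph{separable} algebra $A_\alpha$ dominated by $q_\alpha$ and by $1-q_\alpha$. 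For an arbitrary member $B_\alpha$ of the given (adversarial) family there is no reason any proper projection of $B_\alpha$ dominates a nonzero positive element of $A_\alpha$ at all --- generically $A_\alpha\cap B_\alpha=\mathbb{C}1$ and no element of $A_\alpha$ is supported under a proper projection of $B_\alpha$ --- so $m(q)=0$ for every candidate $q$ and the extension interval $[m(q),1-m(1-q)]$ gives nothing. Your fallback branch (excising sequence, the averaging estimate, and the spectral projection $\chi_{[\delta,1-\delta]}(E_{B_\alpha}(a_n))$) only produces a nonzero projection of $B_\alpha$; it does not connect back to the extension problem, i.e.\ it yields neither $m(q_\alpha)>0$ nor the existence of a pure extension of $g_\alpha$ taking an intermediate value at $q_\alpha$. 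What is needed is exactly the content of Lemmas 0.5--0.6 of \cite{face}: below any projection carrying the state one can find a further projection which forces every state living on it to take a value near $1/2$ on some projection of $B_\alpha$ (this is where equivalence of equal-trace projections in the $II_1$ factor is used), together with control at countable limit stages for singular states. Without an argument of that strength the recursion can stall at a successor stage, so the proposal as it stands does not prove the theorem.
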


\begin{proof} 
The proof is essentially given in Lemma 0.5-Theorem 0.7 of \cite{face}.  However, those results assume that $\script N$ acts on a separable Hilbert space.  Thus we need to be sure that each time that hypothesis is used in \cite{face}, we can show here that it is not needed under the hypotheses of the present theorem.
 
 1.  To modify the proof of Lemma 0.5 of \cite{face} to handle the present theorem, we note that in a $II_1$ factor, any two projections with the same trace are unitarily equivalent.
 
 2.  To modify the proof of Lemma 0.6 of \cite{face} to handle the present theorem, we need only show that for any projection $p \in \script N$ and any singular state $f$ on $\script N$ such that $f(p)=1$, there is a sequence $\{p_n\}$ of projections in $\script N$ such that $f(p_n)=1$ for all $n$ and $p_n \downarrow 0$ strongly.
 Let $q_j$ be a maximal orthogonal family of projections under $p$ for which $f(q_j)=0$.  Since $\tau$ is faithful, this family must be at most countable, and by \cite{tak2} the set of $\{q_j\}$ must be infinite.  Set $p_j=\sum_{i=j}^\infty q_i$.  This works since $\tau(p_j) \to 0$ because $\tau$ is normal.
 
 3. To modify the proof of Theorem 0.7 of \cite{face} to handle the present theorem, we need only note that both $\script N$ and $C(\script N)$ have cardinality $c = \aleph_1$ under CH.
 
 \end{proof}
 
 \begin{corollary} 
 There is a pure state $g$ of $M$ that is not multiplicative on any MASA of $M$.
 \end{corollary}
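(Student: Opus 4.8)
The plan is to exhibit $M$ as mapping onto a suitable type $II_1$ factor and then to pull back the pure state furnished by Theorem 4.2. Fix a free ultrafilter $\omega$ on $\mathbb N$ and set $J = I_2(\{\omega\})$; since $\{\omega\}$ is a closed point of $\beta\mathbb N$, this is one of the ideals studied in Section 1. Let $\pi\colon M\to\script N := M/J$ be the quotient map. First I would observe that $\script N$ is nothing but the tracial ultraproduct $\prod_{n\to\omega}(M_n,\tau_n)$: $M$ is literally the $\ell^\infty$-product of the $M_n$, and $J$ is exactly the trace-kernel ideal $\{(x_n):\lim_\omega\|x_n\|_2 = 0\}$, so by the standard theory of tracial ultraproducts $\script N$ is a $II_1$ factor carrying the faithful normal tracial state $\tau([\oplus x_n]) = \lim_\omega\tau_n(x_n)$. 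Being infinite dimensional, $\script N$ has cardinality at least $c$, and since $|M| = c$ we get $|\script N| = c$. Thus hypotheses $(1)$ and $(2)$ of Theorem 4.2 hold, and we invoke the Continuum Hypothesis as hypothesis $(4)$.

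Next I would build the family $C(\script N)$. By part $(6)$ of Proposition 1.5, applied with $\Omega = \{\omega\}$, the image $\pi(D) = (D+J)/J$ is a MASA of $\script N$, and consequently, for every unitary $u\in M$, the conjugate $\pi(uDu^*) = \pi(u)\pi(D)\pi(u)^*$ is again a MASA of $\script N$. I would take $C(\script N)$ to consist of all the MASAs $\pi(uDu^*)$ with $u\in\mathcal U(M)$, enlarged if necessary by further MASAs of $\script N$ so that $|C(\script N)| = c$ (this is harmless: there are at most $|\mathcal U(M)| = c$ sets of the form $\pi(uDu^*)$, and $\script N$, being a $II_1$ factor, has at least $c$ MASAs). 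Theorem 4.2 then produces a pure state $g_0$ of $\script N$ that is not multiplicative on any member of $C(\script N)$.

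Finally I would set $g := g_0\circ\pi$. Since $\pi$ is a surjective $*$-homomorphism of unital C*-algebras, $\pi^*$ carries the state space of $\script N$ onto the weak* closed face $\{h\in S(M):h|_J = 0\}$ of $S(M)$, and extreme points of a face of a convex set are extreme in the whole set, so $g$ is a pure state of $M$ (it is automatically singular, as it vanishes on $J$, which contains every $M_n$, whereas no normal state of $M=\bigoplus_n M_n$ can vanish on all the $M_n$). Given any MASA $B$ of $M$, recall from the proof of Proposition 3.5 that $B$ is unitarily equivalent to $D$, say $B = uDu^*$. Because $\pi$ is multiplicative, $g$ is multiplicative on $B$ if and only if $g_0$ is multiplicative on $\pi(B) = \pi(uDu^*)$ --- but $\pi(uDu^*)$ belongs to $C(\script N)$, on which $g_0$ is by construction not multiplicative. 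Hence $g$ fails to be multiplicative on $B$; as $B$ was an arbitrary MASA of $M$, $g$ is the required pure state. The one substantive point, beyond assembling standard facts, is the bridge between the ultraproduct picture and the MASA structure of $M$: we need both that $M/I_2(\{\omega\})$ really is a $II_1$ factor of cardinality $c$ and --- this is the crux --- that the image under $\pi$ of a MASA of $M$ is an honest MASA of $\script N$, not merely a weakly dense abelian subalgebra of one; this is exactly what part $(6)$ of Proposition 1.5 supplies, and it is what allows non-multiplicativity to transfer in the correct logical direction.
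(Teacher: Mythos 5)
Your proof is correct and follows essentially the same route as the paper: quotient $M$ by a maximal ideal (you take $I_2(\{\omega\})$, i.e.\ the tracial ultraproduct) to obtain a $II_1$ factor of cardinality $c$, apply Theorem 4.2 under CH to the family of images of the MASAs of $M$, and pull the resulting pure state back along the quotient map. The only real difference is that you make explicit, via Proposition 1.5(6) and the unitary equivalence of every MASA of $M$ with $D$, that these images are genuine MASAs of the quotient --- a point the paper leaves implicit in its appeal to Feldman's result.
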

\begin{proof}
If $J$ is any maximal ideal of $M$ and $Q_J$ is the quotient map of $M$ onto $M/J$, then by \cite{feld}, $Q_J(M)$ is a $II_1$ factor that has the cardinality of the continuum and  the set of images of the MASAs of $M$ also has the cardinality of the continuum. Thus the  Corollary follows by taking the pure state $f$ on $Q_J(M)$ that is non-multiplicative  on each of the images of the MASAs of $M$ from the last theorem and defining $g=f\circ Q_J$.
\end{proof}
\bigskip

\newpage

\end{document}